\renewcommand\section{\@startsection{section}{1}{0mm}{-1.5\baselineskip}{\baselineskip}{\normalsize\bfseries\sffamily}}
\renewcommand\subsection{\@startsection{subsection}{1}{0mm}{-\baselineskip}{\baselineskip}{\normalsize\bfseries\sffamily}}
\def\@fnsymbol#1{\ensuremath{\ifcase#1\or *\or **\or \dagger\or \ddagger\or
   \mathsection\or \mathparagraph\or \|\or \dagger\dagger
   \or \ddagger\ddagger \else\@ctrerr\fi}}
\newlength{\preskip}
\newlength{\postskip}
\newtheoremstyle{theorem}{\preskip}{\postskip}{\itshape}{}{\bfseries}{}
{.5em}{\textbf{\thmname{#1}\thmnumber{ #2} (\thmnote{ #3})}}
\newtheoremstyle{definition}{\preskip}{\postskip}{\normalfont}{0pt}{\bfseries}{}{.5em}{}
\newtheoremstyle{remark}{\preskip}{\postskip}{\normalfont}{0pt}{\bfseries}{}{.5em}{}
\theoremstyle{theorem} \newtheorem{thm}{Theorem}[section]
\theoremstyle{theorem} \newtheorem{lem}[thm]{Lemma}
\theoremstyle{theorem} \newtheorem{prop}[thm]{Proposition}
\theoremstyle{theorem} \newtheorem{kor}[thm]{Corollary}
\theoremstyle{definition} 
\theoremstyle{remark} 
\theoremstyle{remark} 
\theoremstyle{definition} 
\theoremstyle{definition} \newtheorem*{ack}{Acknowledgements}
\theoremstyle{remark} \newtheorem{bem}[thm]{Remark}
\theoremstyle{remark} 
\theoremstyle{definition}  \newtheorem{bsp}[thm]{Example}
\theoremstyle{definition}  
\theoremstyle{definition}
\DeclareMathOperator \re {Re}
\DeclareMathOperator \im {Im}
\DeclareMathOperator \id {id}
\DeclareMathOperator \lip {Lip}
\DeclareMathOperator \spt {supp}
\DeclareMathOperator \tr {tr}
\newcommand{\I}{\mathds{1}}
\newcommand\fa{\qquad \text{for all \ }}
\newcommand{\firstpara}[1]{ \ \textup{#1\ \ }}
\newcommand{\para}[1]{\bigskip\textup{#1\ \ }}
\newcommand\mc[1] {\mathcal{#1}}
\newcommand\mbb[1] {\mathds{#1}}
\newcommand{\eps}{\varepsilon}
\begin{document}

\title[Schauder estimates for equations associated with L\'evy generators]{Schauder estimates for equations associated with L\'evy generators}
\author[F.~K\"{u}hn]{Franziska K\"{u}hn} 
\address[F.~K\"{u}hn]{Institut de Math\'ematiques de Toulouse, Universit\'e Paul Sabatier III Toulouse, 118 Route de Narbonne, 31062 Toulouse, France. \emph{On leave from:} TU Dresden, Fachrichtung Mathematik, Institut f\"{u}r Mathematische Stochastik, 01062 Dresden, Germany.}
\email{franziska.kuhn@math.univ-toulouse.fr}
\subjclass[2010]{Primary: 60G51 Secondary: 45K05, 60J35}
\keywords{L\'evy process; integro-differential equation; Schauder estimate; H\"older space; gradient estimate}

\begin{abstract}
	We study the regularity of solutions to the integro-differential equation $Af-\lambda f=g$ associated with the infinitesimal generator $A$ of  a L\'evy process. We show that gradient estimates for the transition density can be used to derive Schauder estimates for $f$. Our main result allows us to establish Schauder estimates for a wide class of L\'evy generators, including generators of stable L\'evy processes and subordinated Brownian motions. Moreover, we obtain new insights on the (domain of the) infinitesimal generator of a L\'evy process whose characteristic exponent $\psi$ satisfies $\text{Re} \, \psi(\xi) \asymp |\xi|^{\alpha}$ for large $|\xi|$. We discuss the optimality of our results by studying in detail the domain of the infinitesimal generator of the Cauchy process.
\end{abstract}
\maketitle

\section{Introduction} \label{intro}

Let $(X_t)_{t \geq 0}$ be a L\'evy process. By the L\'evy--Khintchine formula, the infinitesimal generator $(A,\mc{D}(A))$ of $(X_t)_{t \geq 0}$ has the representation \begin{equation*}
	Af(x) = b \cdot \nabla f(x) + \frac{1}{2} \tr(Q \cdot \nabla^2 f(x)) + \int_{y \neq 0} \left( f(x+y)-f(x)-\nabla f(x) \cdot y \I_{(0,1)}(|y|) \right) \, \nu(dy) 
\end{equation*}
for smooth compactly supported functions $f \in C_c^{\infty}(\mbb{R}^d)$ where $(b,Q,\nu)$ is the L\'evy triplet of $(X_t)_{t \geq 0}$, cf.\ Section~\ref{def}. In this paper, we study the H\"{o}lder regularity of solutions $f \in \mc{D}(A)$ to the integro-differential equation \begin{equation}
	Af + \varrho f= g  \label{intro-eq3}
\end{equation}
for fixed $\varrho \in \mbb{R}$. We are interested in the following question: If $g$ is $\beta$-H\"{o}lder continuous for some $\beta \geq 0$, then what can we say about the regularity of $f$? In particular: How regular is a function $f \in \mc{D}(A)$? \par
For the particular case that $A$ is a second order differential operator, i.\,e.\ $\nu=0$, the regularity of solutions to \eqref{intro-eq3} is well understood, see e.\,g.\ \cite{gilbarg}, and therefore our focus is on non-local L\'evy generators. An important example of a non-local L\'evy generator is the fractional Laplacian \begin{equation*}
	-(-\Delta)^{\alpha/2} f(x) = c_{d,\alpha} \int_{y \neq 0} \left( f(x+y)-f(x)-\nabla f(x) \cdot y \I_{(0,1)}(|y|) \right) \frac{1}{|y|^{d+\alpha}} \, dy, \quad f\in C_c^{\infty}(\mbb{R}^d),
\end{equation*} 
which is the infinitesimal generator of the isotropic $\alpha$-stable L\'evy process, $\alpha \in (0,2)$, and which plays an important role in analysis and probability theory, see e.\,g.\ the survey paper \cite{kwas17} for further information. Bass \cite{bass08} showed that the solution to $- (-\Delta)^{\alpha/2} f=g$ satisfies the Schauder estimate \begin{equation*}
	\|f\|_{C^{\alpha+\beta}_b(\mbb{R}^d)} \leq L \left(\|f\|_{\infty} + \|g\|_{C^{\beta}_b(\mbb{R}^d)}\right)
\end{equation*}
for $\beta \geq 0$ such that neither $\beta$ nor $\alpha+\beta$ are integers. More recently, Ros-Oton \& Serra \cite{ros-oton16} established Schauder estimates for solutions to \eqref{intro-eq3} for generators of symmetric stable L\'evy processes. Bae \& Kassmann \cite{bae} introduced generalized H\"{o}lder space and studied, in particular, the regularity of solutions for L\'evy operators of the form \begin{equation*}
	Af(x) = \int_{y \neq 0} \left( f(x+y)-f(x)-\nabla f(x) \cdot y \I_{(0,1)}(|y|) \right) \frac{1}{|y|^d \varphi(|y|)} \, dy
\end{equation*}
where $\varphi: \mbb{R}^d \to (0,\infty)$ is a ``nice'' function.  Furthermore, it is known that the classical theory for pseudo-differential operators can be used to study the regularity of solutions to \eqref{intro-eq3} if the characteristic exponent $\psi$ of $(X_t)_{t \geq 0}$ is sufficiently smooth, see  \cite{jac2,stein93}. Since \begin{equation*}
	\psi \in C^{2n}(\mbb{R}^d) \iff \int_{|y|>1} |y|^{2n} \, \nu(dy) < \infty \iff \mbb{E}(|X_1|^{2n})< \infty, \quad n \in \mbb{N},
\end{equation*}
cf.\ \cite{moments,sato}, this approach excludes many interesting examples of L\'evy processes which do not have moments of sufficiently high order. Let us mention that the questions, which we discuss in this paper, are also related to the regularity of harmonic functions: If $g=0$ and $\varrho=0$ in \eqref{intro-eq3}, i.\,e.\ $Af=0$, then $f$ is harmonic for $A$, and there is an extensive literature on the regularity of functions which are harmonic for a L\'evy generator, cf.\ \cite{grz14,hansen17,ryznar15,szt10} and the references therein. The regularity of solutions to elliptic integro-differential equations $Af=g$ has been studied, more generally, for classes of L\'evy-type operators, see e.\,g.\ \cite{bae,bass08,dong12,jac2,reg-feller}, and for non-linear integro-differential operators, greatly influenced by the works of Barles et.\ al \cite{barles10} and Caffarelli \& Silvestre \cite{caff}. 

The approach, which we follow in this paper, relies on regularizing properties of the resolvent $R_{\lambda}$ associated with the L\'evy process $(X_t)_{t \geq 0}$, \begin{equation*}
	R_{\lambda} h(x) := \int_{(0,\infty)} e^{-\lambda t} \mbb{E}h(x+X_t) \, dt, \qquad \lambda>0, h \in \mc{B}_b(\mbb{R}^d), x \in \mbb{R}^d.
\end{equation*}
The main idea is to use gradient estimates for the transition density $p_t$ of $X_t$ to measure the regularizing effect of $R_{\lambda}$. More precisely, we will show that the gradient estimate \begin{equation*}
	\int_{\mbb{R}^d} |\nabla p_t(x)| \, dx \leq M t^{-1/\alpha}, \quad t \in (0,T], 
\end{equation*}
implies that $R_{\lambda}$ has a regularizing effect of order $\alpha$, i.\,e.\ \begin{equation*}
	h \in \mc{C}_b^{\delta}(\mbb{R}^d) \implies R_{\lambda} h \in \mc{C}_b^{\alpha+\delta}(\mbb{R}^d)
\end{equation*}
for any $\delta \geq 0$, cf.\ Section~\ref{def} for the definition of the H\"{o}lder--Zygmund spaces $\mc{C}_b^{\gamma}(\mbb{R}^d)$. As $\mc{D}(A) = R_{\lambda}(C_{\infty}(\mbb{R}^d))$ this gives, in particular, $\mc{D}(A) \subseteq \mc{C}_b^{\alpha}(\mbb{R}^d)$. Our main result, Theorem~\ref{levy-9}, shows that, more generally, the implication \begin{equation*}
	Af \in \mc{C}_b^{\delta}(\mbb{R}^d) \implies f \in \mc{C}_b^{\alpha+\delta}(\mbb{R}^d) 
\end{equation*}
holds for any $\delta \geq 0$. 

\begin{thm} \label{levy-9}
	Let $(X_t)_{t \geq 0}$ be a L\'evy process with infinitesimal generator $(A,\mc{D}(A))$ and characteristic exponent $\psi: \mbb{R}^d \to \mbb{C}$ satisfying the Hartman--Wintner condition \begin{equation}
		\lim_{|\xi| \to \infty} \frac{\re \psi(\xi)}{\log(|\xi|)} = \infty. \label{hw} \tag{HW}
	\end{equation}
	 Assume that there exist constants $M>0$, $T>0$ and $\alpha \in (0,2]$ such that the transition density $p_t$ of $(X_t)_{t \geq 0}$ satisfies \begin{equation}
		\int_{\mbb{R}^d} |\nabla p_t(x)| \, dx \leq M t^{-1/\alpha}, \qquad t \in (0,T]. \label{levy-eq1}
	\end{equation}
	If $f \in \mc{D}(A)$ is such that  \begin{equation*}
		Af + \varrho f= g \in \mc{C}^{\delta}_b(\mbb{R}^d) 
	\end{equation*}
	for some $\delta \geq 0$ and $\varrho \in \mbb{R}$, then $f \in \mc{C}_b^{\alpha+\delta}(\mbb{R}^d)$ and the Schauder estimate\begin{equation}
		\|f\|_{\mc{C}_b^{\alpha+\delta}(\mbb{R}^d)} \leq L \big(\|g\|_{\mc{C}^{\delta}_b(\mbb{R}^d)} + \|f\|_{\infty}\big) \label{levy-eq33}
	\end{equation}
	holds for a finite constant $L=L(\varrho,\delta,\alpha,M,d,T)$. In particular, $\mc{D}(A) \subseteq \mc{C}_{\infty}^{\alpha}(\mbb{R}^d)$.
\end{thm}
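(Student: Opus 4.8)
The plan is to reduce the statement about the equation $Af+\varrho f=g$ to a statement about the resolvent $R_\lambda$, and then to prove the key lemma that \eqref{levy-eq1} forces $R_\lambda$ to gain $\alpha$ derivatives on the H\"older--Zygmund scale. First I would fix $\lambda>0$ large enough (say $\lambda>\varrho$, though since $(X_t)$ is a contraction semigroup any $\lambda>0$ with $\lambda>\varrho\vee 0$ works) and rewrite $Af+\varrho f=g$ as $(\lambda-A)f=(\lambda+\varrho)f-g=:\tilde g$. Because $f\in\mc D(A)$ and $\tilde g\in C_\infty(\mbb R^d)$ — here the Hartman--Wintner condition \eqref{hw} is what guarantees $(X_t)$ has a transition density and that $C_\infty(\mbb R^d)$ is the right state space, so that $\mc D(A)=R_\lambda(C_\infty(\mbb R^d))$ and $R_\lambda=(\lambda-A)^{-1}$ on $C_\infty$ — we get $f=R_\lambda\tilde g$. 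Thus everything comes down to showing: if $h\in\mc C_b^{\delta}(\mbb R^d)$ then $R_\lambda h\in\mc C_b^{\alpha+\delta}(\mbb R^d)$ with a norm bound, because then $f=R_\lambda\tilde g\in\mc C_b^{\alpha+\delta}$ with $\|f\|_{\mc C_b^{\alpha+\delta}}\lesssim\|\tilde g\|_{\mc C_b^\delta}\lesssim\|f\|_{\mc C_b^\delta}+\|g\|_{\mc C_b^\delta}$, and a standard interpolation inequality $\|f\|_{\mc C_b^\delta}\leq\eps\|f\|_{\mc C_b^{\alpha+\delta}}+C_\eps\|f\|_\infty$ lets us absorb the $\|f\|_{\mc C_b^\delta}$ term on the left to arrive at \eqref{levy-eq33}.

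The heart of the matter is therefore the regularizing estimate for $R_\lambda$. I would split $R_\lambda h(x)=\int_0^T e^{-\lambda t}P_th(x)\,dt+\int_T^\infty e^{-\lambda t}P_th(x)\,dt$ where $P_th(x)=\mbb Eh(x+X_t)=\int p_t(y)h(x+y)\,dy$. The tail integral is harmless: $\|P_th\|_\infty\leq\|h\|_\infty$ and the exponential weight makes $\int_T^\infty e^{-\lambda t}P_th\,dt$ bounded with all the regularity we want coming for free (it is even smoother than needed, or one iterates), so it contributes only to the $\|h\|_\infty$-part of the bound. For the main piece $\int_0^T e^{-\lambda t}P_th\,dt$, the mechanism is that convolution against $p_t$ is smoothing: writing $\nabla^k(p_t*h)$, one can move derivatives onto $p_t$. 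Concretely, for $h\in\mc C_b^\delta$ one estimates the relevant $\mc C_b^{\alpha+\delta}$-seminorm of $P_th$ — e.g. for $\delta\in(0,1)$ and $\alpha+\delta<1$ a H\"older seminorm, in general a suitable finite-difference/Littlewood--Paley quantity — by $\|\nabla p_t\|_{L^1}\cdot[h]_{\mc C_b^{\delta}}\leq Mt^{-1/\alpha}[h]_{\mc C_b^\delta}$, using \eqref{levy-eq1}. Integrating, $\int_0^T e^{-\lambda t}t^{-1/\alpha}\,dt<\infty$ precisely because $\alpha\geq$ the relevant exponent (and one uses $\alpha\le 2$; the gain is capped at $\alpha$), which yields $[R_\lambda h]_{\mc C_b^{\alpha+\delta}}\lesssim\|h\|_{\mc C_b^\delta}$. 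For $\alpha+\delta\geq 1$ one iterates this one-derivative gain or, cleanly, works with the Zygmund characterization via second differences and the semigroup property $p_{t}=p_{t/2}*p_{t/2}$ so that $\nabla^2 p_t=\nabla p_{t/2}*\nabla p_{t/2}$, giving $\|\nabla^2 p_t\|_{L^1}\leq M^2(t/2)^{-2/\alpha}$ and more generally $\|\nabla^k p_t\|_{L^1}\lesssim t^{-k/\alpha}$.

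The main obstacle I expect is making the second step fully rigorous across the whole range of $\delta\geq0$ and for all $\alpha\in(0,2]$ without running into integrability issues at $t=0$: the naive bound $\|\nabla^k p_t\|_{L^1}\lesssim t^{-k/\alpha}$ is only integrable against $dt$ near $0$ when $k<\alpha$, so to gain a full $\alpha$ derivatives one cannot simply differentiate $\alpha$ times under the integral. The right device is to measure regularity with finite differences of an order strictly larger than $\alpha+\delta$ (a Zygmund-type norm), bound a finite difference $\Delta_z^m(P_th)$ by interpolating between the "cheap" estimate (put no derivative on $p_t$, pay $|z|^{m}$ nothing, giving $\|h\|_\infty$ or $[h]_{\mc C^\delta}|z|^\delta$) and the "expensive" estimate (put $\lfloor\alpha\rfloor+1$ derivatives on $p_t$, paying $t^{-(\lfloor\alpha\rfloor+1)/\alpha}$ but gaining $|z|^{\lfloor\alpha\rfloor+1}$), then splitting the $t$-integral at $t\asymp|z|^\alpha$; this is the standard Schauder-by-kernel-estimates computation. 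Keeping track of the dependence of the final constant $L$ on $\varrho,\delta,\alpha,M,d,T$ is bookkeeping but must be done carefully, particularly the $\varrho$-dependence entering through the choice of $\lambda$ and the requirement $\lambda>\varrho$. The very last assertion $\mc D(A)\subseteq\mc C_\infty^\alpha(\mbb R^d)$ follows by taking $\delta=0$, $\varrho=0$, $g=Af\in C_\infty(\mbb R^d)\subseteq\mc C_b^0(\mbb R^d)$, so $f\in\mc C_b^\alpha$, and then noting $f=R_\lambda(\lambda f-Af)\in R_\lambda(C_\infty)\subseteq C_\infty$ together with the fact that $R_\lambda$ maps into $C_\infty$, giving $f\in\mc C_\infty^\alpha(\mbb R^d)$.
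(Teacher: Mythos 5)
Your proposal is correct and follows essentially the same route as the paper: reduce via $f=R_\lambda\big((\lambda+\varrho)f-g\big)$ to the smoothing property of the resolvent, derive $\|\nabla^k p_t\|_{L^1}\lesssim t^{-k/\alpha}$ from \eqref{levy-eq1} by the convolution identity $p_{2t}=p_t*p_t$, estimate Zygmund-type finite differences of $R_\lambda h$ by splitting the time integral at $t\asymp|z|^\alpha$, and extend to all $\delta\ge 0$ by differentiating under the integral plus interpolation. Two details to make rigorous: for $\delta>\alpha$ your absorption step presupposes $\|f\|_{\mc C_b^{\alpha+\delta}}<\infty$, so you must first run the bootstrap $f\in\mc C_b^{\delta\wedge (k\alpha)}\Rightarrow f\in\mc C_b^{\delta\wedge((k+1)\alpha)}$ explicitly (the paper does exactly this and, by carrying the bound $\|g\|_{\mc C_b^\delta}+\|(\lambda-A)f\|_\infty$ through the induction, avoids the interpolation inequality altogether); and for $\alpha>1$ the conclusion $\mc D(A)\subseteq\mc C_\infty^\alpha(\mbb R^d)$ requires $\nabla R_\lambda h\in C_\infty(\mbb R^d)$, which needs differentiation under the integral sign as in Proposition~\ref{levy-3}\eqref{levy-3-ii}, not just $R_\lambda(C_\infty(\mbb R^d))\subseteq C_\infty(\mbb R^d)$.
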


\begin{bem} \label{levy-1}\begin{enumerate}[wide, labelwidth=!, labelindent=0pt]
	\item\label{levy-1-i} From the proof of Theorem~\ref{levy-9} it is possible to obtain an explicit expression for the constant $L$ in terms of $\delta$, $\alpha$, $M$, $d$ and $T$. 
	\item\label{levy-1-ii} Condition \eqref{levy-eq1} is equivalent to saying that the semigroup $P_t u(x) := \mbb{E}u(x+X_t)$ satisfies the gradient estimate \begin{equation*}
		\|\nabla P_t u\|_{\infty} \leq M' t^{-1/\alpha} \|u\|_{\infty}, \qquad t \in (0,T), \, u \in \mc{B}_b(\mbb{R}^d),
	\end{equation*}
	cf.\ \cite[Lemma 4.1]{euler-maruyama} for details.
	\item\label{levy-1-iii} It is no restriction to assume that $\alpha \leq 2$. If \eqref{levy-eq1} holds for some $\alpha \geq 0$, then $\alpha \leq 2$, cf.\ Remark~\ref{levy-5}\eqref{levy-5-ii}.
	\item\label{levy-1-iv} The Hartman--Wintner condition \eqref{hw} ensures that $X_t$ has a smooth density $p_t$ for all $t>0$, see \cite{knop13} for a thorough discussion of \eqref{hw}. 
\end{enumerate} \end{bem}

Gradient estimates for L\'evy processes have been  intensively studied in the last years, e.\,g.\ \cite{szcz17,kaleta15,matters,ryznar15,ssw12} to mention but a few, and therefore Theorem~\ref{levy-9} applies to a wide class of L\'evy processes.  If $(X_t)_{t \geq 0}$ is a subordinated Brownian motion, then it is possible to  derive gradient estimates from heat kernel estimates for the transition density using the dimension walk formula, cf.\ \cite[Corollary 3.2]{schoenberg}. \par
Theorem~\ref{levy-9} will be proved in Section~\ref{proof}, and in Section~\ref{ex} we will illustrate Theorem~\ref{levy-9} with some examples and applications. In particular, we will present Schauder estimates for elliptic equations $Af+\varrho f=g$ associated with generators of continuous L\'evy processes, stable L\'evy processes and subordinated Brownian motions.  Moreover, we will study in detail the infinitesimal generator $(A,\mc{D}(A))$ of a L\'evy process whose characteristic exponent $\psi$ satisfies the sector condition, $|\im \psi(\xi)| \leq c |\re \psi(\xi)|$, and 
\begin{equation*}
	\re \psi(\xi) \asymp |\xi|^{\alpha} \quad \text{as $|\xi| \to \infty$};
\end{equation*}
combining Theorem~\ref{levy-9} with results from \cite{ihke,ssw12} we will show that  \begin{equation}
	\mc{C}_{\infty}^{\alpha+}(\mbb{R}^d) 
	:= \bigcup_{\beta>\alpha} \mc{C}_{\infty}^{\beta}(\mbb{R}^d) 
	\subseteq \mc{D}(A) 
	\subseteq \mc{C}_{\infty}^{\alpha}(\mbb{R}^d), \label{intro-eq11}
\end{equation} 
and this, in turn, will allow us to prove that $\mc{D}(A)$ is an algebra, that is $f \cdot g \in \mc{D}(A)$ for any $f,g \in \mc{D}(A)$, and that \begin{equation*}
	A(f \cdot g) = g \cdot Af + f \cdot Ag + \Gamma(f,g), \qquad f,g \in \mc{D}(A)
\end{equation*}
where $\Gamma$ is the Carr\'e du Champ operator, cf.\ Theorem~\ref{levy-11}. It is natural to ask whether the inclusions in \eqref{intro-eq11} are strict and whether \eqref{intro-eq11} is the optimal way to describe $\mc{D}(A)$ in terms of H\"older spaces. In Section~\ref{cauchy} we will investigate these questions for the case $\alpha=1$, which is of particular interest since there is no canonical way to define the H\"{o}lder space $\mc{C}^1(\mbb{R}^d)$. We will show for the two-dimensional Cauchy process that \eqref{intro-eq11} (with $\alpha=1$) is indeed the best possible way to describe the domain in terms of H\"older spaces and, moreover, we will see that the inclusions are strict.

\section{Basic definitions and notation} \label{def}

We consider the Euclidean space $\mbb{R}^d$ with the canonical scalar product $x \cdot y := \sum_{j=1}^d x_j y_j$ and the Borel $\sigma$-algebra $\mc{B}(\mbb{R}^d)$ generated by the open balls $B(x,r)$. For functions $f,g: \mbb{R}^d \to (0,\infty)$ we write $f \asymp g$ as $|x| \to \infty$ if there exist constants $c>0$ and $R>0$ such that \begin{equation*}
	\frac{1}{c} f(x) \leq g(x) \leq c f(x) \fa |x| \geq R.
\end{equation*}
If $f$ is a real-valued function, then $\spt f$ denotes its support, $\nabla f$ the gradient and $\nabla^2 f$ the Hessian of $f$. For $\alpha \geq 0$ we set \begin{equation}
	\lfloor \alpha \rfloor := \max\{k \in \mbb{N}_0; k \leq \alpha\} \quad \text{and} \quad \lsem \alpha \rsem := \max\{k \in \mbb{N}_0; k < \alpha\}. \label{def-eq1}
\end{equation}

\emph{Function spaces:} $\mc{B}_b(\mbb{R}^d)$ is the space of bounded Borel-measurable functions $f: \mbb{R}^d \to \mbb{R}$. The smooth functions with compact support are denoted by $C_c^{\infty}(\mbb{R}^d)$, and $C_{\infty}(\mbb{R}^d)$ is the space of continuous functions $f: \mbb{R}^d \to \mbb{R}$ vanishing at infinity. Superscripts $k\in\mbb{N}$ are used to denote the order of differentiability, e.\,g.\ $f \in C_{\infty}^k(\mbb{R}^d)$ means that $f$ and its derivatives up to order $k$ are $C_{\infty}(\mbb{R}^d)$-functions. For $\alpha \geq 0$ we define H\"older--Zygmund spaces $\mc{C}_b^{\alpha}(\mbb{R}^d)$ by \begin{equation}
	\mc{C}_b^{\alpha}(\mbb{R}^d) := \left\{f \in C_b(\mbb{R}^d); \|f\|_{\mc{C}_b^{\alpha}(\mbb{R}^d)} := \|f\|_{\infty}+\sup_{x \in \mbb{R}^d,  h \neq 0} \frac{|\Delta_h^{\lfloor \alpha \rfloor +1} f(x)|}{|h|^{\alpha}} < \infty \right\} \label{def-eq2}
\end{equation}
where \begin{equation*}
	\Delta_h f(x) := f(x+h) - f(x) \qquad \Delta_h^j f(x) := \Delta_h (\Delta_h^{j-1} f)(x), \quad j \geq 2
\end{equation*}
are iterated difference operators. Moreover, we set \begin{equation}
	\mc{C}_{\infty}^{\alpha}(\mbb{R}^d) := \mc{C}_b^{\alpha}(\mbb{R}^d) \cap C_{\infty}^{\lsem \alpha \rsem}(\mbb{R}^d) \quad \text{and} \quad \mc{C}_{\infty}^{\alpha+}(\mbb{R}^d) := \bigcup_{\eps>0} \mc{C}_{\infty}^{\alpha+\eps}(\mbb{R}^d). \label{def-eq25}
\end{equation}
For $\alpha \in (0,\infty) \backslash \mbb{N}$ the H\"older space $\mc{C}_b^{\alpha}(\mbb{R}^d)$ coincides with the ``classical'' H\"older space $C_b^{\alpha}(\mbb{R}^d)$ equipped with norm
	\begin{equation*}
	\|f\|_{C_b^{\alpha}(\mbb{R}^d)} := \|f\|_{\infty}+\sum_{j=0}^{\lfloor \alpha \rfloor}
	\sum_{\substack{\beta \in \mbb{N}_0^d \\ |\beta| = j}} \|\partial^{\beta} f\|_{\infty}
	+   \max_{\substack{\beta \in \mbb{N}_0^d \\ |\beta| = \lfloor \alpha \rfloor}}   \sup_{x \neq y} \frac{|\partial^{\beta} f(x)-\partial^{\beta} f(y)|}{|x-y|^{\alpha-\lfloor \alpha \rfloor}}.
\end{equation*}
If $\alpha \in \mbb{N}$ is an integer, then the H\"older--Zygmund space $\mc{C}_b^{\alpha}(\mbb{R}^d)$ is strictly larger than $C_b^{\alpha}(\mbb{R}^d)$. For $\alpha=1$ it is possible to show that $\mc{C}_{b}^1(\mbb{R}^d)$ is strictly larger than the space of bounded Lipschitz continuous functions, cf.\ \cite[p.~ 148]{stein}, which is, in turn, strictly larger than $C_{b}^1(\mbb{R}^d)$. By \cite[Theorem 2.7.2.2]{triebel78}, it holds for all $\alpha>0$ that \begin{equation}
	\|f\|_{\mc{C}_b^{\alpha}(\mbb{R}^d)} \asymp \|f\|_{C_b^{\ell}(\mbb{R}^d)}+\sum_{\substack{\beta \in \mbb{N}_0^d \\ |\beta| \leq \ell}} \sup_{\substack{x \in \mbb{R}^d \\ 0<|h|<\delta}} \frac{|\Delta_h^{k} \partial_x^{\beta} f(x)|}{|h|^{\alpha-\ell}}\label{def-eq3}
\end{equation}
for any $\delta \in (0,\infty]$ and $k,\ell \in \mbb{N}_0$ such that $\ell<\alpha$ and $\ell+k>\alpha$. Later on, we will use the following result from interpolation theory. If $T: C_b(\mbb{R}^d) \to C_b(\mbb{R}^d)$ is a linear operator, then \begin{equation}
	\|T\|_{\mc{C}_b^{\lambda \alpha_1+(1-\lambda) \alpha_2} \to \mc{C}_b^{\lambda \beta_1+(1-\lambda) \beta_2}} \leq  \|T\|_{C_b^{\alpha_1} \to \mc{C}_b^{\beta_1}}^{\lambda} \|T\|_{C^{\alpha_2}_b \to \mc{C}^{\beta_2}_b}^{1-\lambda}
	  \label{def-eq5}
\end{equation}
for any $\alpha_i \geq 0$, $\beta_i \geq 0$ and $\lambda \in (0,1)$ where \begin{equation*}
	\|T\|_{X \to Y} := \inf\{c>0; \forall f \in Y, \|f\|_Y \leq 1: \, \, \|Tf\|_X \leq c\};
\end{equation*}
this inequality follows from the interpolation theorem, see e.\,g. \cite[Section 1.3.3]{triebel78} or \cite[Theorem 1.6]{lunardi}, and the fact that $\mc{C}_b^{\lambda \alpha}(\mbb{R}^d)$ is the real interpolation space $(C_b(\mbb{R}^d), C_b^{\alpha}(\mbb{R}^d)_{\lambda,\infty}$, cf.\ \cite[Theorem 2.7.2.1]{triebel78}.

\emph{L\'evy processes:} Throughout, $(\Omega,\mc{A},\mbb{P})$ is a probability space. A stochastic process $X_t: \Omega \to \mbb{R}^d$ is a ($d$-dimensional) L\'evy process if $X_0=0$ almost surely, $(X_t)_{t \geq 0}$ has independent and stationary increments and $t \mapsto X_t(\omega)$ is right-continuous with finite left-hand limits for almost all $\omega \in \Omega$. By the L\'evy--Khintchine formula, any L\'evy process is uniquely determined in distribution by its characteristic exponent $\psi: \mbb{R}^d \to \mbb{C}$ through the relation \begin{equation*}
	\mbb{E}\exp(i \xi X_t) = \exp(-t \psi(\xi)), \qquad t \geq 0, \, \xi \in \mbb{R}^d.
\end{equation*}
The characteristic exponent $\psi$ has the L\'evy--Khintchine representation \begin{equation*}
	\psi(\xi) = -ib \cdot \xi + \frac{1}{2} \xi \cdot Q \xi + \int_{y \neq 0} \left( 1-e^{iy \cdot \xi} + iy \cdot \xi \I_{(0,1)}(|y|) \right) \, \nu(dy), \quad \xi \in \mbb{R}^d,
\end{equation*}
where $(b,Q,\nu)$ is the L\'evy triplet consisting of a vector $b \in \mbb{R}^d$ (drift vector), a symmetric positive semi-definite matrix $Q \in \mbb{R}^{d \times d}$ (diffusion matrix) and a measure $\nu$ on $\mbb{R}^d \backslash \{0\}$ which satisfies the integrability condition $\int_{y \neq 0} \min\{1,|y|^2\} \, \nu(dy)<\infty$,  the so-called L\'evy measure. If the characteristic exponent $\psi$ of a L\'evy process $(X_t)_{t \geq 0}$ satisfies the Hartman--Wintner condition  \begin{equation*}
	\lim_{|\xi| \to \infty} \frac{\re \psi(\xi)}{\log(|\xi|)} = \infty,
\end{equation*}
then $X_t$ has a density $p_t$ with respect to Lebesgue measure for any $t>0$ and $p_t$ has bounded derivatives of arbitrary order; we refer to \cite{knop13} for a detailed discussion. \par
It follows from the independence and stationarity of the increments that any L\'evy process is a time-homogeneous Markov process, i.\,e.\ $P_t f(x) := \mbb{E}f(x+X_t)$ defines a Markov semigroup. We denote by $(A,\mc{D}(A))$ the infinitesimal generator associated with $(X_t)_{t \geq 0}$, \begin{align*}
	\mc{D}(A) &:= \left\{f \in C_{\infty}(\mbb{R}^d); \exists g \in C_{\infty}(\mbb{R}^d): \lim_{t \to 0} \left\| \frac{P_t f-f}{t} -g  \right\|_{\infty} = 0 \right\}, \\
	Af &:= \lim_{t \to 0} \frac{P_t f-f}{t}, \quad f \in \mc{D}(A).
\end{align*}
It is well-known that $C_c^{\infty}(\mbb{R}^d)$ is contained in $\mc{D}(A)$ and that \begin{equation*}
	Af(x) = b \cdot \nabla f(x) + \frac{1}{2} \tr(Q \cdot \nabla^2 f(x)) + \int_{y \neq 0} \left( f(x+y)-f(x)-\nabla f(x) \cdot y \I_{(0,1)}(|y|) \right) \, \nu(dy)
\end{equation*}
for any $f \in C_c^{\infty}(\mbb{R}^d)$, see e.\,g.\ \cite[Theorem 31.5]{sato}; here $(b,Q,\nu)$ denotes the L\'evy triplet of $(X_t)_{t \geq 0}$. Moreover,  the resolvent \begin{equation*}
	R_{\lambda} f(x) := \int_{(0,\infty)} e^{-\lambda t} P_t f(x) \, dt, \qquad f \in \mc{B}_b(\mbb{R}^d), \, \lambda>0, \, x \in \mbb{R}^d,
\end{equation*}
satisfies $R_{\lambda}(C_{\infty}(\mbb{R}^d)) = \mc{D}(A)$ for any $\lambda>0$. Our standard reference for L\'evy processes is the monograph \cite{sato} by Sato.

\section{Proof of Theorem~\ref{levy-9}} \label{proof}

The first two results in this section prepare the proof of Theorem~\ref{levy-9} but are of independent interest. 

\begin{prop} \label{levy-3}
	Let $(X_t)_{t \geq 0}$ be a L\'evy process with resolvent $(R_{\lambda})_{\lambda>0}$ and infinitesimal generator $(A,\mc{D}(A))$. Assume that the Hartman--Wintner condition \eqref{hw} holds. If the transition density $p_t$ satisfies \begin{equation}
		\int_{\mbb{R}^d} |\partial_{x_j} p_t(x)| \, dx \leq M e^{mt} t^{-1/\alpha}, \qquad t>0, \, j \in \{1,\ldots,d\}, \label{levy-eq11}
	\end{equation}
	for some constants $M>0$, $m \geq 0$, and $\alpha \in (0,2]$, then each of the following statements hold true. \begin{enumerate}
		\item\label{levy-3-i} $R_{\lambda}(\mc{B}_b(\mbb{R}^d)) \subseteq \mc{C}_b^{\alpha}(\mbb{R}^d)$ for any $\lambda \geq 3m$ and \begin{equation}
			\|R_{\lambda} f\|_{\mc{C}_b^{\alpha}(\mbb{R}^d)} \leq  K \|f\|_{\infty}, \qquad f \in \mc{B}_b(\mbb{R}^d) \label{levy-eq13}
		\end{equation}
		for a constant $K=K(m,\alpha,d,\lambda,M)$. 
		\item\label{levy-3-ii} If $\alpha>1$ then $R_{\lambda}(C_{\infty}(\mbb{R}^d)) \subseteq C_{\infty}^1(\mbb{R}^d)$ for any $\lambda >m$.
		\item\label{levy-3-iii} $\mc{D}(A) \subseteq \mc{C}^{\alpha}_{\infty}(\mbb{R}^d)$.
	\end{enumerate}
\end{prop}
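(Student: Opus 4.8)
The plan is to exploit the representation $R_\lambda f(x) = \int_0^\infty e^{-\lambda t} P_t f(x)\,dt$ and the fact that, for $t>0$, the density $p_t$ exists and is smooth (thanks to (HW)), so that $P_t f(x) = \int f(x+y) p_t(y)\,dy$ and, by a change of variables, $\partial_{x_j} P_t f(x) = \int f(y) \partial_{x_j} p_t(y-x)\,dy$, whence $\|\partial_{x_j} P_t f\|_\infty \le \|f\|_\infty \int |\partial_{x_j} p_t|$. Thus (for $t \in (0,\infty)$) \eqref{levy-eq11} gives the gradient estimate $\|\nabla P_t f\|_\infty \le \sqrt{d}\, M e^{mt} t^{-1/\alpha}\|f\|_\infty$; see Remark~\ref{levy-1}\eqref{levy-1-ii}.

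For part~\eqref{levy-3-i} I would argue by interpolation between two cases. First, the trivial bound $\|P_t f\|_\infty \le \|f\|_\infty$ shows $R_\lambda$ maps $\mc{B}_b$ into $C_b$ with norm $\le \lambda^{-1}$, i.e. a ``regularizing effect of order $0$''. Second, for $\alpha$-th order: to pass from the first-order gradient bound to a genuinely $\alpha$-th order smoothing one uses the semigroup property $P_t = P_{t/2} P_{t/2}$ to iterate the gradient estimate and control second differences. Concretely, writing $\Delta_h^{2} P_t f$ and inserting $P_t = P_{t-s}P_s$, one estimates $\|\Delta_h^2 P_t f\|_\infty$ by first applying $\|\nabla^2 P_t f\|_\infty$-type bounds — obtained by applying the gradient estimate twice, once to each half-time factor — giving $\|\nabla^2 P_t f\|_\infty \lesssim (t/2)^{-2/\alpha} e^{mt}\|f\|_\infty$, hence $\|\Delta_h^2 P_t f\|_\infty \le C e^{mt}\min\{1, (t^{-1/\alpha}|h|)^{\lfloor\alpha\rfloor+1}\}\|f\|_\infty$ after also using the zeroth-order bound for large $t$. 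Plugging this into $\Delta_h^{\lfloor\alpha\rfloor+1} R_\lambda f = \int_0^\infty e^{-\lambda t}\Delta_h^{\lfloor\alpha\rfloor+1}P_t f\,dt$, splitting the integral at $t = |h|^\alpha$, and using $\lambda \ge 3m$ to absorb $e^{mt}$ against $e^{-\lambda t}$, one obtains $\sup_{x,h}|h|^{-\alpha}|\Delta_h^{\lfloor\alpha\rfloor+1}R_\lambda f(x)| \le K\|f\|_\infty$, which is exactly \eqref{levy-eq13}. (Cleaner: prove the bound $\|R_\lambda\|_{C_b \to \mc{C}_b^\alpha} < \infty$ directly for the ``base'' exponents and then invoke \eqref{def-eq5}; but the hands-on splitting above is self-contained.) The main technical obstacle is handling the endpoint/integer values of $\alpha$, where one must work with the Zygmund-type second-difference characterization \eqref{def-eq2}/\eqref{def-eq3} rather than ordinary Hölder seminorms — this is the reason the statement is phrased in terms of $\mc{C}_b^\alpha$.

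For part~\eqref{levy-3-ii}, when $\alpha > 1$ the gradient estimate $\|\nabla P_t f\|_\infty \le \sqrt d M e^{mt} t^{-1/\alpha}\|f\|_\infty$ has $\int_1^0$... rather, $\int_0^1 t^{-1/\alpha}\,dt < \infty$ since $1/\alpha < 1$, so $\nabla R_\lambda f(x) = \int_0^\infty e^{-\lambda t}\nabla P_t f(x)\,dt$ converges absolutely and uniformly for $\lambda > m$; differentiation under the integral sign is justified by dominated convergence. For $f \in C_\infty(\mbb R^d)$ one then checks $R_\lambda f \in C_\infty$ (standard: $P_t$ preserves $C_\infty$, and $R_\lambda f$ is a uniform limit of such) and that $\nabla R_\lambda f \in C_\infty$ as well — each $\nabla P_t f$ vanishes at infinity (being a convolution of $f\in C_\infty$ against the integrable function $\nabla p_t$), and the integral is a uniform limit of $C_\infty$ functions. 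Hence $R_\lambda f \in C_\infty^1(\mbb R^d)$.

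Finally part~\eqref{levy-3-iii} combines the previous two. Recall $\mc{D}(A) = R_\lambda(C_\infty(\mbb R^d))$ for any $\lambda > 0$; fix $\lambda \ge 3m$. By~\eqref{levy-3-i}, $\mc{D}(A) \subseteq \mc{C}_b^\alpha(\mbb R^d)$. Since elements of $\mc{D}(A)$ lie in $C_\infty(\mbb R^d)$ by definition, we get $\mc{D}(A) \subseteq \mc{C}_b^\alpha(\mbb R^d) \cap C_\infty(\mbb R^d)$, which already gives the claim when $\alpha \le 1$ (then $\lsem\alpha\rsem = 0$ and $\mc{C}_\infty^\alpha = \mc{C}_b^\alpha \cap C_\infty$). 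When $\alpha > 1$, part~\eqref{levy-3-ii} additionally gives $\mc{D}(A) \subseteq C_\infty^1(\mbb R^d) = C_\infty^{\lsem\alpha\rsem}(\mbb R^d)$ (here $\lsem\alpha\rsem = 1$ since $\alpha \in (1,2]$), so $\mc{D}(A) \subseteq \mc{C}_b^\alpha(\mbb R^d) \cap C_\infty^{\lsem\alpha\rsem}(\mbb R^d) = \mc{C}_\infty^\alpha(\mbb R^d)$ by definition~\eqref{def-eq25}. I expect the bookkeeping around the case $\alpha = 2$ and the precise meaning of $\lsem\cdot\rsem$ to be the only delicate point here.
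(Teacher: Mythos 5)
Your proposal is correct and follows essentially the same route as the paper: iterate the gradient bound through the semigroup factorization (the paper does this via Chapman--Kolmogorov at the level of $\|\partial_x^\beta p_t\|_{L^1}$, you at the level of $\|\nabla^k P_t f\|_\infty$, which is equivalent by Remark~\ref{levy-1}\eqref{levy-1-ii}), split the resolvent integral at $t=|h|^\alpha$, use the trivial bound for small times and the higher-derivative bound for large times, and conclude via the higher-difference characterization of $\mc{C}_b^\alpha$; parts \eqref{levy-3-ii} and \eqref{levy-3-iii} are argued identically. The only cosmetic difference is that the paper works uniformly with the third difference (hence invokes \eqref{def-eq3}) whereas you use $\Delta_h^{\lfloor\alpha\rfloor+1}$ matching \eqref{def-eq2} directly.
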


\begin{proof}[Proof of Proposition~\ref{levy-3}]
	\firstpara{\eqref{levy-3-i}} It was shown in \cite[Lemma 4.1]{euler-maruyama} that \eqref{levy-eq11} implies \begin{equation}
		\int_{\mbb{R}^d} |\partial_{x_i} \partial_{x_j} p_{2t}(x)| \, dx \leq c(t)^2 \fa t>0,\, i,j=1,\ldots,d \label{levy-eq19}
	\end{equation}
	where $c(t) := M e^{mt} t^{-1/\alpha}$. For the readers' convenience we briefly explain the idea of the proof. By the Chapman--Kolmogorov equation, we have \begin{equation*}
		p_{2t}(x) = \int_{\mbb{R}^d} p_t(x-y) p_t(y) \, dy,
	\end{equation*}
	and so \begin{equation}
		\partial_{x_i} p_{2t}(x) = \int_{\mbb{R}^d} p_t(y) \partial_{x_i} p_t(x-y)  \, dy = \int_{\mbb{R}^d} p_t(x-z) \partial_{x_i} p_t(z) \, dz \label{levy-eq21}
	\end{equation}
	which implies \begin{equation*}
		\partial_{x_j} \partial_{x_i} p_{2t}(x) = \int_{\mbb{R}^d} \big( \partial_{x_j} p_t(x-z) \big) \big( \partial_{x_i} p_t(z) \big) \, dz.
	\end{equation*}
	Applying Tonelli's theorem we conclude that \begin{equation*}
		\int_{\mbb{R}^d} |\partial_{x_i} \partial_{x_j} p_{2t}(x)| \, dx \leq \left( \int_{\mbb{R}^d} |\partial_{x_i} p_t(z)| \, dz \right) \left( \int_{\mbb{R}^d} |\partial_{x_j} p_t(z)| \, dz \right) \leq c(t)^2,
	\end{equation*}
	and this proves \eqref{levy-eq19}. Iterating the procedure, we get \begin{equation}
		\int_{\mbb{R}^d} |\partial_x^{\beta} p_t(x)| \, dx \leq c(t)^{|\beta|} \fa \beta \in \mbb{N}_0^d. \label{levy-eq20}
	\end{equation}
	Now fix $f \in \mc{B}_b(\mbb{R}^d)$, $\lambda \geq 3m$ and $x,h \in \mbb{R}^d$. Since \begin{equation*}
		R_{\lambda} f(z) = \int_{(0,\infty)} \int_{\mbb{R}^d} e^{-\lambda t} f(y) p_t(y-z) \, dy \, dt
	\end{equation*}
	we have \begin{align*}
		|R_{\lambda} f(x+3h)+3 R_{\lambda} f(x+h)-3R_{\lambda} f(x+2h)-R_{\lambda} f(x)|
		&\leq I_1 + I_2
	\end{align*}
	where \begin{align*}
		I_1 &:= \left| \int_{t \leq |h|^{\alpha}} \int_{\mbb{R}^d} e^{-\lambda t} f(y) (p_t(y-x-3h)+3p_t(y-x-h)-3p_t(y-x-2h)-p_t(y-x)) \, dy \, dt \right| \\
		I_2 &:= \left| \int_{t>|h|^{\alpha}} \int_{\mbb{R}^d} e^{-\lambda t} f(y) (p_t(y-x-3h)+3p_t(y-x-h)-3p_t(y-x-2h)-p_t(y-x)) \, dy \, dt \right|.
	\end{align*}
	Using $\int_{\mbb{R}^d} |p_t(z+y)| \, dy = 1$ it follows from the triangle inequality that \begin{equation*}
		I_1
		\leq 8 \|f\|_{\infty} \sup_{z \in \mbb{R}^d} \int_{\mbb{R}^d} |p_t(z+y)| \, dy \int_0^{|h|^{\alpha}} \, dt
		= 8|h|^{\alpha} \|f\|_{\infty}.
	\end{equation*}
	To estimate $I_2$ we note that, by the multivariate version of Taylor's theorem, \begin{equation*}
		|p_t(y-x-3h)+3p_t(y-x-h)-3p_t(y-x-2h)-p_t(y-x)| \leq C |h|^3 \sum_{i=1}^3 \sum_{|\beta|=3} \int_0^1 |\partial_x^{\beta} p_t(y-x-i rh)| \, dr
	\end{equation*}
	for an absolute constant $C>0$.  Applying Tonelli's theorem and using \eqref{levy-eq20} we get \begin{align*}
		I_2
		&\leq C d^3 |h|^3 \|f\|_{\infty} \max_{|\beta|=3} \sum_{i=1}^3 \int_{t>|h|^{\alpha}}e^{-\lambda t} \int_0^1 \int_{\mbb{R}^d} |\partial_{x}^{\beta} p_t(y-x-irh)| \, dy \, dr \, dt \\
		&\leq C d^3 M^3 |h|^3 \|f\|_{\infty} \int_{t>|h|^{\alpha}} e^{(3m-\lambda)t} t^{-3/\alpha} \, dt
	\end{align*}
	As $\lambda \geq 3m$ and $\alpha \in (0,2]$ this implies \begin{align*}
		I_2
		\leq C d^3 M^3 |h|^3 \|f\|_{\infty} \int_{t>|h|^{\alpha}} t^{-3/\alpha} \, dt
		= C d^3 M^3 \frac{\alpha}{3-\alpha} \|f\|_{\infty} |h|^{\alpha}.
	\end{align*}
	Consequently, we have shown that \begin{align*}
		|R_{\lambda} f(x+3h)+3 R_{\lambda} f(x+h)-3R_{\lambda} f(x+2h)-R_{\lambda} f(x)| \leq C' |h|^{\alpha}, \qquad x,h \in \mbb{R}^d,
	\end{align*}
	 and by \eqref{def-eq3} this proves \eqref{levy-eq13}.
	 
	\para{\eqref{levy-3-ii}} If $\alpha>1$ then a straight-forward application of the differentiation lemma for parametrized integrals, see e.\,g.\  \cite[Theorem 12.5]{mims} or \cite[Proposition A.1]{euler-maruyama}, shows that \begin{equation*}
		\partial_{x_j} R_{\lambda} f(x) = - \int_{(0,\infty)} \int_{\mbb{R}^d} e^{-\lambda t} f(y) \partial_{x_j} p_t(y-x) \, dy \, dt
	\end{equation*}
	for $f \in \mc{B}_b(\mbb{R}^d)$, $\lambda>m$ and $j \in \{1,\ldots,d\}$. Since this clearly implies that \begin{equation*}
		\partial_{x_j} R_{\lambda} f(x) = - \int_{(0,\infty)} \int_{\mbb{R}^d} e^{-\lambda t} f(x+y) \partial_{x_j} p_t(y) \, dy \, dt
	\end{equation*}
	we can apply the dominated convergence theorem to conclude that $\lim_{|x| \to \infty} |\partial_{x_j} R_{\lambda} f(x)|=0$ for any $f \in C_{\infty}(\mbb{R}^d)$.
	
	\para{\eqref{levy-3-iii}} Since $R_{\lambda}(C_{\infty}(\mbb{R}^d)) = \mc{D}(A)$ for any $\lambda>0$, the assertion is obvious from \eqref{levy-3-i} and \eqref{levy-3-ii}.
\end{proof}

\begin{bem} \label{levy-5} \begin{enumerate}[wide, labelwidth=!, labelindent=0pt] 
	\item\label{levy-5-i}If there are constants $M>0$, $T>0$ and $\alpha \in (0,2]$ such that \begin{equation}
		\int_{\mbb{R}^d} |\partial_{x_i} p_t(x)| \, dx \leq M t^{-1/\alpha}, \qquad t \in (0,T], \, i \in \{1,\ldots,d\}, \label{levy-eq15}
		\end{equation}
		then there exists $m \geq 0$ such that \eqref{levy-eq11} holds.  \emph{Indeed:} Fix $t \in (0,T)$ and $i \in \{1,\ldots,d\}$. It follows from \eqref{levy-eq21} that \begin{align*}
			\int_{\mbb{R}^d} |\partial_{x_i} p_{2t}(x)| \, dx 
			&\leq \int_{\mbb{R}^d} |\partial_{x_i} p_t(z)|  \left( \int_{\mbb{R}^d} |p_t(x-z)| \, dx \right) \, dz \\
			&= \int_{\mbb{R}^d} |\partial_{x_i} p_t(z)| \, dz 
			\leq c(t) := M t^{-1/\alpha},
		\end{align*}
		which gives \begin{equation*}
			\int_{\mbb{R}^d} |\partial_{x_i} p_s(x)| \, dx \leq c(s/2) = M 2^{1/\alpha} s^{-1/\alpha} \fa s  \in [T,2T).
		\end{equation*}
		By iteration we find that \begin{equation*}
			\int_{\mbb{R}^d} |\partial_{x_i} p_s(x)| \, dx \leq M (2^{1/\alpha})^k s^{-1/\alpha} \fa s \in [2^{k-1}T,2^kT),\, k \in \mbb{N}.
		\end{equation*}
		Hence, \eqref{levy-eq11} holds for $m := \log(2^{1/(\alpha T)})$. 
		\item\label{levy-5-ii} If \eqref{levy-eq15} holds for some $\alpha \geq 0$, then $\alpha \leq 2$. Indeed: The Fourier transform of $x \mapsto \partial_{x_j} p_t(x)$ equals $i \xi_j e^{-t \psi(\xi)}$, and therefore \begin{equation*}
			\sup_{\xi \in \mbb{R}^d} |\xi_j e^{-t \psi(\xi)}| \leq \|\partial_{x_j} p_t\|_{L^1} = \int_{\mbb{R}^d} |\partial_{x_j} p_t(x)| \, dx.
		\end{equation*}
		Since the characteristic exponent$\psi$ satisfies $|\psi(\xi)| \leq c(1+|\xi|^2)$, $\xi \in \mbb{R}^d$, for some constant $c>0$ this gives \begin{equation*}
			\int_{\mbb{R}^d} |\partial_{x_j} p_t(x)| \, dx \geq \sup_{\xi \in \mbb{R}^d} |\xi_j e^{-ct (1+|\xi|^2)}  \geq c' t^{-1/2}.
		\end{equation*}
		\end{enumerate}
 \end{bem}

In Proposition~\ref{levy-3} we have seen that $R_{\lambda} f \in \mc{C}_b^{\alpha}(\mbb{R}^d)$ for $f \in C_{b}(\mbb{R}^d)$.  Our next result, Corollary~\ref{levy-7}, shows that, more generally, \begin{equation*}
	f \in \mc{C}_b^{\beta}(\mbb{R}^d) \implies R_{\lambda} f \in \mc{C}_b^{\alpha+\beta}(\mbb{R}^d)
\end{equation*}
for any $\beta \geq 0$. 

\begin{kor} \label{levy-7}
	Let $(X_t)_{t \geq 0}$ be a L\'evy process with resolvent $(R_{\lambda})_{\lambda>0}$ and infinitesimal generator $(A,\mc{D}(A))$  such that its characteristic exponent $\psi$ satisfies the Hartman--Wintner condition \eqref{hw}. If there exist constants $M>0$, $m \geq 0$ and $\alpha \in (0,2]$ such that the transition density $p_t$ satisfies \begin{equation*}
		\int_{\mbb{R}^d} |\partial_{x_i} p_t(x)| \, dx \leq  M e^{mt} t^{-1/\alpha}, \qquad t>0, \, i \in \{1,\ldots,d\},
	\end{equation*}
	then there exists for any $k \in \mbb{N}$ a constant $K=K(d,M,\alpha,\lambda,k)>0$ such that \begin{equation}
		\|R_{\lambda} f\|_{\mc{C}_b^{\alpha+\beta}(\mbb{R}^d)} \leq K \|f\|_{\mc{C}_b^{\beta}(\mbb{R}^d)} \fa f \in \mc{C}_b^{\beta}(\mbb{R}^d), \, \beta \in (0,k), \, \lambda \geq 3m. \label{levy-eq25}
	\end{equation}
\end{kor}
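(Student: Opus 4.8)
The plan is to treat Proposition~\ref{levy-3}\eqref{levy-3-i} as the base case of a bootstrap through integer orders, and then to recover the intermediate (non-integer) orders by interpolation via \eqref{def-eq5}. Fix $\lambda \ge 3m$ and let $K_0 = K(m,\alpha,d,\lambda,M)$ be the constant from Proposition~\ref{levy-3}\eqref{levy-3-i}. The first observation is that $R_\lambda$ is translation invariant — it is convolution against $\int_{(0,\infty)} e^{-\lambda t} p_t(\lcdot)\,dt$ — so it commutes with differentiation: for $n \in \mbb{N}$ and $f \in C_b^n(\mbb{R}^d)$, repeated use of the differentiation lemma for parametrised integrals (exactly as in the proof of Proposition~\ref{levy-3}\eqref{levy-3-ii}, using the boundedness of the $\partial_x^\beta f$) gives
\[ \partial_x^\beta R_\lambda f = R_\lambda(\partial_x^\beta f), \qquad |\beta| \le n. \]
In particular $\|\partial_x^\beta R_\lambda f\|_\infty \le \lambda^{-1}\|\partial_x^\beta f\|_\infty$ since $\|P_t g\|_\infty \le \|g\|_\infty$, and Proposition~\ref{levy-3}\eqref{levy-3-i} applied to $\partial_x^\beta f \in C_b(\mbb{R}^d)$ gives $\|R_\lambda(\partial_x^\beta f)\|_{\mc{C}_b^\alpha(\mbb{R}^d)} \le K_0\|\partial_x^\beta f\|_\infty$.

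Next I would establish the integer endpoint estimate $\|R_\lambda f\|_{\mc{C}_b^{\alpha+n}(\mbb{R}^d)} \le K_n\|f\|_{C_b^n(\mbb{R}^d)}$ for every $n \in \mbb{N}$, with $K_n = K_n(d,M,\alpha,\lambda,n)$. Here I invoke the norm equivalence \eqref{def-eq3} for $\mc{C}_b^{\alpha+n}$ with $\ell = n$, $k = 3$ and $\delta = \infty$ (admissible since $0 < \alpha$ and $\alpha \le 2 < 3$): it reduces the estimate to controlling $\|R_\lambda f\|_{C_b^n(\mbb{R}^d)}$ — immediate from the previous paragraph — and, for each $|\beta| \le n$, the seminorm $\sup_{x,\,0<|h|} |h|^{-\alpha}\,|\Delta_h^3 \partial_x^\beta R_\lambda f(x)|$. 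Since $\Delta_h^3 \partial_x^\beta R_\lambda f = \Delta_h^3\big(R_\lambda(\partial_x^\beta f)\big)$, applying \eqref{def-eq3} for $\mc{C}_b^\alpha$ (now with $\ell = 0$, $k = 3$, $\delta = \infty$) and then the bounds of the previous paragraph estimates this seminorm by $C K_0\|\partial_x^\beta f\|_\infty \le C K_0\|f\|_{C_b^n(\mbb{R}^d)}$; summing over $|\beta| \le n$ yields the endpoint estimate.

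Finally I would interpolate. The resolvent $R_\lambda$ is a bounded linear operator on $C_b(\mbb{R}^d)$ with $\|R_\lambda\|_{C_b^0 \to \mc{C}_b^\alpha} \le K_0$ (Proposition~\ref{levy-3}\eqref{levy-3-i}, since $C_b^0 = C_b$) and $\|R_\lambda\|_{C_b^k \to \mc{C}_b^{\alpha+k}} \le K_k$ (previous step). Applying the interpolation inequality \eqref{def-eq5} with $\alpha_1 = 0$, $\beta_1 = \alpha$, $\alpha_2 = k$, $\beta_2 = \alpha+k$ and interpolation parameter $\theta \in (0,1)$ (in place of the $\lambda$ appearing there) gives
\[ \|R_\lambda\|_{\mc{C}_b^{(1-\theta)k} \to \mc{C}_b^{\alpha+(1-\theta)k}} \le K_0^{\theta}\,K_k^{\,1-\theta}. \]
As $\theta$ runs over $(0,1)$ the exponent $(1-\theta)k$ runs over all of $(0,k)$, so for given $\beta \in (0,k)$ the choice $\theta = 1-\beta/k$ gives \eqref{levy-eq25} with $K = K_0^{1-\beta/k}K_k^{\beta/k}$, depending only on $d$, $M$, $\alpha$, $\lambda$ and $k$. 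I expect the only genuine work to lie in the middle step: rigorously justifying that $\partial_x^\beta$ commutes through $R_\lambda$ for higher-order multi-indices, and unwinding the several equivalent norms on the Zygmund spaces $\mc{C}_b^{\alpha+n}$ and $\mc{C}_b^\alpha$ via \eqref{def-eq3}; conceptually the argument is a routine combination of Proposition~\ref{levy-3} with the interpolation theorem.
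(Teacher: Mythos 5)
Your proposal is correct and follows essentially the same route as the paper: commute $\partial_x^\gamma$ through $R_\lambda$ via the differentiation lemma for parametrised integrals, apply Proposition~\ref{levy-3}\eqref{levy-3-i} to each $\partial_x^\gamma f$ together with the norm equivalence \eqref{def-eq3} to get the integer endpoint bound $\|R_\lambda f\|_{\mc{C}_b^{\alpha+k}} \leq cK\|f\|_{C_b^k}$, and then interpolate via \eqref{def-eq5} between this and the $C_b \to \mc{C}_b^{\alpha}$ bound. The only difference is that you spell out the unwinding of the equivalent Zygmund norms in more detail than the paper does.
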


\begin{proof}
	Fix $\lambda \geq 3m$ and $k \in \mbb{N}$, and let $f \in C_b^k(\mbb{R}^d)$. Since \begin{equation*}
		R_{\lambda} f(x) = \int_{(0,\infty)} \int_{\mbb{R}^d} e^{-\lambda t} f(x+y) p_t(y) \, dy \, dt, \qquad x \in \mbb{R}^d
	\end{equation*}
	it follows from an application of the differentiation lemma for parametrized integrals that \begin{equation}
		\partial_x^{\gamma} R_{\lambda} f(x) = \int_{(0,\infty)} \int_{\mbb{R}^d} e^{-\lambda t} \partial^{\gamma}_x f(x+y) p_t(y) \, dy \, dt = R_{\lambda}(\partial_x^{\gamma} f)(x) \label{levy-eq27}
	\end{equation}
	for any multi-index $\gamma \in \mbb{N}_0^d$ with $|\gamma| := \sum_{i=1}^d \gamma_i \leq k$. By Proposition~\ref{levy-3}, there exists a constant $K>0$ such that \begin{equation*}
		\|\partial^{\gamma} R_{\lambda} f\|_{\mc{C}_b^{\alpha}(\mbb{R}^d)} 
		= \|R_{\lambda}(\partial^{\gamma} f)(x)\|_{\mc{C}_b^{\alpha}(\mbb{R}^d)} 
		\leq K \|\partial^{\gamma} f\|_{\infty},
	\end{equation*}
	and so, by \eqref{def-eq3}, \begin{equation*}
		\|R_{\lambda} f\|_{\mc{C}_b^{\alpha+k}(\mbb{R}^d)} \leq c K \|f\|_{C_b^k(\mbb{R}^d)} \fa f \in C_b^k(\mbb{R}^d)
	\end{equation*}
	for some constant $c=c(k) \geq 1$. On the other hand, Proposition~\ref{levy-3} shows that \begin{equation*}
		\|R_{\lambda} h\|_{\mc{C}_b^{\alpha}(\mbb{R}^d)} \leq K \|h\|_{C_b(\mbb{R}^d)} \fa h \in C_b(\mbb{R}^d).
	\end{equation*}
	Applying the interpolation theorem, cf.\ \eqref{def-eq5}, we thus find that  \begin{equation*}
		\|R_{\lambda} h\|_{\mc{C}_b^{\alpha+\beta}(\mbb{R}^d)} \leq c K \|h\|_{\mc{C}_b^{\beta}(\mbb{R}^d)}, \qquad h \in \mc{C}_b^{\beta}(\mbb{R}^d)
	\end{equation*}
	for any $\beta \in (0,k)$. 
\end{proof}

A close look at the proof of Corollary~\ref{levy-7} shows that $R_{\lambda} f \in C_{\infty}^{\lsem \beta \rsem + \lsem \alpha \rsem}(\mbb{R}^d)$ for any $f \in C_{\infty}^{\beta}(\mbb{R}^d)$, cf.\ \eqref{def-eq1} for the definition of $\lsem \alpha \rsem$ and $\lsem \beta \rsem$; this is a consequence of \eqref{levy-eq27} and Proposition~\ref{levy-3}\eqref{levy-3-ii}. \par \medskip


We are now ready to prove Theorem~\ref{levy-9}.

\begin{proof}[Proof of Theorem~\ref{levy-9}]
	Let $f \in \mc{D}(A)$ be such that $Af+ \varrho f =g\in \mc{C}_b^{\delta}(\mbb{R}^d)$ for some $\delta \geq 0$ and $\varrho \in \mbb{R}$. It follows from \eqref{levy-eq1} and Remark~\ref{levy-5}\eqref{levy-5-i} that $p_t$ satisfies \eqref{levy-eq11} for some $m \geq 0$, and we set $\lambda:=3m+1$. Since $\mc{D}(A) = R_{\lambda}(C_{\infty}(\mbb{R}^d))$ there exists $h \in C_{\infty}(\mbb{R}^d)$ such that $f = R_{\lambda} h$. As $Af+\varrho f=g$ we have $AR_{\lambda} h + \varrho R_{\lambda} h = g$, and using that $(\lambda \id-A) R_{\lambda} h= h$ this gives \begin{equation}
		h = (\lambda+\varrho) R_{\lambda} h - g. \label{levy-eq35}
	\end{equation}
	We claim that for any $k \in \mbb{N}_0$ there exists a constant $c_k>0$ (not depending on $f$, $g$) such that \begin{equation}
		h \in \mc{C}^{\delta \wedge (k \alpha)}_b(\mbb{R}^d) \quad \text{and} \quad \|h\|_{\mc{C}_b^{\delta \wedge (k \alpha)}(\mbb{R}^d)} \leq c_k \left(\|g\|_{\mc{C}^{\delta}_b(\mbb{R}^d)} + \|h\|_{\infty} \right); \label{levy-eq37} 
	\end{equation}
	we prove \eqref{levy-eq37} by induction. For $k=0$ the assertion  is obvious as $h \in C_b(\mbb{R}^d)$ and $\|\lambda R_{\lambda} h\|_{\infty} \leq \|h\|_{\infty}$. Now suppose that \eqref{levy-eq37} holds for some $k \in \mbb{N}_0$.  It follows from Corollary~\ref{levy-7} (with $\beta=\delta \wedge (\alpha k)$) that $R_{\lambda} h \in \mc{C}^{\alpha + (\delta \wedge (\alpha k))}_b(\mbb{R}^d)$ and \begin{equation*}
		\|R_{\lambda} h\|_{\mc{C}_b^{\alpha+(\delta \wedge (\alpha k))}(\mbb{R}^d)} \leq K \|h\|_{\mc{C}_b^{\delta \wedge (k \alpha)}(\mbb{R}^d)}.
	\end{equation*}
	Since, by assumption, $g \in \mc{C}^{\delta}_b(\mbb{R}^d)$ we find from \eqref{levy-eq35} that $h \in \mc{C}^{((k+1) \alpha) \wedge \delta}_b(\mbb{R}^d)$ and \begin{align*}
		\|h\|_{\mc{C}_b^{\delta \wedge ((k+1)\alpha)}(\mbb{R}^d)}  
		&\leq \|g\|_{\mc{C}_b^{\delta}(\mbb{R}^d)} + |\lambda+\varrho| \, \|R_{\lambda} h\|_{\mc{C}_b^{\alpha+(\delta \wedge (\alpha k))}(\mbb{R}^d)} \\
		&\leq \|g\|_{\mc{C}_b^{\delta}(\mbb{R}^d)} + K |\lambda+\varrho| \, \| h\|_{\mc{C}_b^{\delta \wedge (k \alpha)}(\mbb{R}^d)} \\
		&\leq \|g\|_{\mc{C}_b^{\delta}(\mbb{R}^d)} + K |\lambda+\varrho| c_{k} \left( \|g\|_{\mc{C}_b^{\delta}(\mbb{R}^d)} + \|h\|_{\infty} \right),
	\end{align*}
	i.e. \eqref{levy-eq37} holds for $k+1$. We conclude that \eqref{levy-eq37} holds for any $k \in \mbb{N}_0$. If we choose $k \in \mbb{N}$ sufficiently large  such that $k \alpha \geq \delta$, then we find in particular $h \in \mc{C}^{\delta}_b(\mbb{R}^d)$. Applying once more Corollary~\ref{levy-7} we obtain that \begin{equation*}
		\|R_{\lambda} h\|_{\mc{C}_b^{\alpha+\delta}(\mbb{R}^d)} 
		\leq K \|h\|_{\mc{C}_b^{\delta}(\mbb{R}^d)}
		\leq K c_k \big( \|g\|_{\mc{C}_b^{\delta}(\mbb{R}^d)} + \|h\|_{\infty} \big).
	\end{equation*}
	Finally we note that $h=(\lambda \id-A)f$ implies \begin{equation*}
		\|h\|_{\infty} \leq \lambda \|f\|_{\infty} + \|Af\|_{\infty} = \lambda \|f\|_{\infty} + \|g\|_{\infty},
	\end{equation*}
	and therefore we conclude that \begin{equation*}
		\|f\|_{\mc{C}_b^{\alpha+\delta}(\mbb{R}^d)} 
		= \|R_{\lambda} h\|_{\mc{C}_b^{\alpha+\delta}(\mbb{R}^d)} 
		\leq L \big( \|g\|_{\mc{C}_b^{\delta}(\mbb{R}^d)} + \|f\|_{\infty} \big). \qedhere
	\end{equation*}
\end{proof}

\section{Examples} \label{ex}

In this section we illustrate Theorem~\ref{levy-9} with some examples and applications.  \par \medskip

Applying Theorem~\ref{levy-9} to L\'evy processes with continuous sample paths, we recover a classical result, see e.\,g.\ \cite{gilbarg},  on the regularity of the solutions to the second order elliptic differential equation \begin{equation*}
	\varrho f(x) + \sum_{j=1}^d b_j \partial_{x_j} f(x)  + \frac{1}{2} \sum_{i=1}^d \sum_{j=1}^d q_{ij} \partial_{x_i} \partial_{x_j} f(x) = g(x).
\end{equation*}

\begin{bsp} \label{ex-0}
	Let $(B_t)_{t \geq 0}$ be a $d$-dimensional Brownian motion, $b \in \mbb{R}^d$, and let $Q \in \mbb{R}^{d \times d}$ be a symmetric positive definite matrix. The infinitesimal generator $(A,\mc{D}(A))$ of the L\'evy process $X_t := bt + Q \cdot B_t$ satisfies \begin{equation*}
		Af(x) = b \cdot \nabla f(x) + \frac{1}{2} \tr(Q \cdot \nabla^2 f(x)), \qquad f \in C_{\infty}^2(\mbb{R}^d),
	\end{equation*}
	and has the following properties: \begin{enumerate}
		\item $\mc{D}(A) \subseteq \mc{C}_{\infty}^2(\mbb{R}^d)$,
		\item If $Af+\varrho f=g \in \mc{C}_b^{\delta}(\mbb{R}^d)$ for some $\delta \geq 0$ and $\varrho \in \mbb{R}$, then $f \in \mc{C}_b^{2+\delta}(\mbb{R}^d)$. Moreover, there exists a finite constant $L=L(d,\delta,\varrho)>0$ such that \begin{equation*}
			\|f\|_{\mc{C}_b^{\delta+2}(\mbb{R}^d)} \leq L(\|Af\|_{\mc{C}_b^{\delta}(\mbb{R}^d)} + \|f\|_{\infty}), \qquad f \in \mc{D}(A).
		\end{equation*}
	\end{enumerate}
\end{bsp}

For the definition of the H\"older spaces $\mc{C}_{\infty}^{\alpha}(\mbb{R}^d)$ and $\mc{C}_b^{\alpha}(\mathbb{R}^d)$ we refer the reader to Section~\ref{def}. Since there is a closed formula for the transition density $p_t$ of $X_t$, it can be easily verified that the assumptions of Theorem~\ref{levy-9} are satisfied for $\alpha=2$, and this proves the assertion of Example~\ref{ex-0}. \par \medskip

Our next result applies to a large class of L\'evy processes, including stable L\'evy processes. 

\begin{bsp} \label{ex-1}
	Let $(L_t)_{t \geq 0}$ be a pure-jump L\'evy process with infinitesimal generator $(A,\mc{D}(A))$. Assume that its L\'evy measure $\nu$ satisfies \begin{equation*}
		\nu(A) \geq \int_0^{r_0} \!\! \int_{\mbb{S}^{d-1}} \I_A(r \theta) r^{-1-\alpha} \, \mu(d\theta) \, dr + \int_{r_0}^{\infty}\!\! \int_{\mbb{S}^{d-1}} \I_A(r\theta) r^{-1-\beta} \, \mu(d\theta) \, dr, \quad A \in \mc{B}(\mbb{R}^d \backslash \{0\})
	\end{equation*}
	for some constants $\alpha \in (0,2)$, $\beta \in (0,\infty]$ and a finite measure $\mu$ on the unit sphere $\mbb{S}^{d-1} \subseteq \mbb{R}^d$ which is non-degenerate, in the sense that its support is not contained in $\mbb{S}^{d-1} \cap V$ where $V \subseteq \mbb{R}^d$ is a lower-dimensional subspace. Then: \begin{enumerate}
		\item $\mc{D}(A) \subseteq \mc{C}_{\infty}^{\alpha}(\mbb{R}^d)$,
		\item If $f \in \mc{D}(A)$ is such that $Af+\varrho f=g \in \mc{C}_b^{\delta}(\mbb{R}^d)$ for some $\delta \geq 0$ and $\varrho \in \mbb{R}$, then $f \in \mc{C}_b^{\alpha+\delta}(\mbb{R}^d)$. Moreover, there exists for any $\delta \geq 0$ a finite constant $L=L(\alpha,\beta,\mu,d,\delta,\varrho)$ such that \begin{equation*}
			\|f\|_{\mc{C}_{b}^{\alpha+\delta}(\mbb{R}^d)} \leq L (\|Af\|_{\mc{C}_{b}^{\delta}(\mbb{R}^d)}+\|f\|_{\infty}), \qquad f \in \mc{D}(A).
		\end{equation*}
	\end{enumerate}
\end{bsp}

Example~\ref{ex-1} is a direct consequence of Theorem~\ref{levy-9}, Remark~\ref{levy-1}\eqref{levy-1-ii} and \cite[Example 1.5]{ssw12}.    \par \medskip

The remaining part of this section is devoted to L\'evy processes whose characteristic exponent $\psi$ satisfies \begin{equation*}
	\re \psi(\xi) \asymp |\xi|^{\alpha} \quad \text{as $|\xi| \to \infty$}
\end{equation*}
for some $\alpha \in (0,2)$. This class of L\'evy processes covers many important and interesting examples, e.\,g.\  \begin{itemize}
	\item isotropic stable, relativistic stable and tempered stable L\'evy processes, 
	\item subordinated Brownian motions with characteristic exponent $\psi(\xi) = f(|\xi|^2)$ for a Bernstein function $f$ satisfying $f(\lambda) \asymp \lambda^{\alpha/2}$ for large $\lambda$, cf.\ \cite{bernstein} for details.
	\item  L\'evy processes with symbol of the form \begin{equation*}
	\psi(\xi) = |\xi|^{\alpha} + |\xi|^{\beta}, \qquad \xi \in \mbb{R}^d,
\end{equation*}
for $\beta \in (0,\alpha)$. \end{itemize}

\begin{thm} \label{levy-11}
	Let $(X_t)_{t \geq 0}$ be a L\'evy process with infinitesimal generator $(A,\mc{D}(A))$. If the characteristic exponent $\psi$ satisfies the sector condition, $|\im \psi(\xi)| \leq c \re \psi(\xi)$, and \begin{equation}
	\re \psi(\xi) \asymp |\xi|^{\alpha} \quad \text{as $|\xi| \to \infty$} \label{levy-eq45}
	\end{equation}
	for some $\alpha \in (0,2)$, then: \begin{enumerate}
		\item\label{levy-11-i} $\mc{C}_{\infty}^{\alpha+}(\mbb{R}^d):= \bigcup_{\beta>\alpha} \mc{C}_{\infty}^{\beta}(\mbb{R}^d) \subseteq \mc{D}(A) \subseteq \mc{C}_{\infty}^{\alpha}(\mbb{R}^d)$.
		\item\label{levy-11-ii} If $f \in \mc{D}(A)$ is such that $Af+\varrho f \in \mc{C}_b^{\delta}(\mbb{R}^d)$ for some $\delta \geq 0$ and $\varrho \in \mbb{R}$, then $f \in \mc{C}^{\alpha+\delta}_b(\mbb{R}^d)$ and
		\begin{equation*}
					\|f\|_{\mc{C}_{b}^{\alpha+\delta}(\mbb{R}^d)} \leq L (\|Af\|_{\mc{C}_{b}^{\delta}(\mbb{R}^d)}+\|f\|_{\infty}), \qquad f \in \mc{D}(A).
				\end{equation*}
			for some constant $L=L(\delta,\varrho,\alpha)$. 
		\item\label{levy-11-iii} $\mc{D}(A)$ is an algebra, i.\,e.\ $f,g \in \mc{D}(A)$ implies $f \cdot g \in \mc{D}(A)$, and \begin{equation}
		A(f \cdot g)(x) = f(x) Ag(x)+ g(x) Af(x) + \Gamma(f,g)(x), \qquad f,g \in \mc{D}(A) \label{levy-eq46}
		\end{equation}
		where \begin{equation}
		\Gamma(f,g)(x) := \int_{y \neq 0} \big( f(x+y)-f(x) \big) \big( g(x+y)-g(x) \big) \, \nu(dy), \qquad x \in \mbb{R}^d \label{levy-eq48}
		\end{equation}
		is the Carr\'e du Champ operator, cf.\ Remark~\ref{levy-12-iii}; here $\nu$ denotes the L\'evy measure of $(X_t)_{t \geq 0}$.
	\end{enumerate}
\end{thm}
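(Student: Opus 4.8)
The plan is to prove the three parts of Theorem~\ref{levy-11} in order, reducing each assertion to Theorem~\ref{levy-9} plus known facts about the characteristic exponent. First I would verify the gradient estimate \eqref{levy-eq1}. The sector condition together with \eqref{levy-eq45} implies $\re\psi(\xi)\asymp|\xi|^\alpha$ and $|\psi(\xi)|\asymp|\xi|^\alpha$ for large $|\xi|$, which is precisely the kind of hypothesis under which one has a pointwise bound $|\partial_{x_j}p_t(x)|$ controlled so that $\int_{\mbb{R}^d}|\nabla p_t(x)|\,dx\le M t^{-1/\alpha}$ for $t\in(0,T]$; this is exactly \cite[Example 1.5]{ssw12} (cited already for Example~\ref{ex-1}), and $\re\psi(\xi)\asymp|\xi|^\alpha$ forces the Hartman--Wintner condition \eqref{hw} since $|\xi|^\alpha/\log|\xi|\to\infty$. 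With these two inputs in hand, Theorem~\ref{levy-9} applies and immediately yields \eqref{levy-11-ii} as well as the inclusion $\mc{D}(A)\subseteq\mc{C}_\infty^\alpha(\mbb{R}^d)$, half of \eqref{levy-11-i}.

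For the reverse inclusion $\mc{C}_\infty^{\alpha+}(\mbb{R}^d)\subseteq\mc{D}(A)$ in part~\eqref{levy-11-i}, the natural route is the Fourier-analytic characterization of the generator of a L\'evy process: $f\in\mc{D}(A)$ if and only if $\psi\hat f$ is (the Fourier transform of) a $C_\infty(\mbb{R}^d)$-function, equivalently $f$ and $\mc{F}^{-1}(\psi\hat f)$ both lie in $C_\infty(\mbb{R}^d)$. Since $|\psi(\xi)|\lesssim (1+|\xi|)^\alpha$ everywhere (the sector condition promotes the bound on $\re\psi$ to the full modulus, and near the origin $\psi$ is bounded), the multiplier $\psi$ maps the Besov/H\"older--Zygmund space $\mc{C}_\infty^{\alpha+\eps}(\mbb{R}^d)$ continuously into $\mc{C}_\infty^\eps(\mbb{R}^d)\subseteq C_\infty(\mbb{R}^d)$ for any $\eps>0$: this is the standard fact that a symbol of order $\alpha$ acts boundedly $\mc{C}^{s+\alpha}\to\mc{C}^s$, and it is exactly the content of the result from \cite{ihke} that the excerpt advertises. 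Hence any $f\in\mc{C}_\infty^{\alpha+\eps}(\mbb{R}^d)$ has $Af:=\mc{F}^{-1}(-\psi\hat f)\in C_\infty(\mbb{R}^d)$, so $f\in\mc{D}(A)$, proving \eqref{intro-eq11}. (One should be slightly careful that the "generator = Fourier multiplier $-\psi$" identity holds on this class and not merely on $C_c^\infty$; this follows because $C_c^\infty(\mbb{R}^d)$ is a core and the multiplier bound lets one pass to the limit, or one invokes \cite{ihke,ssw12} directly.)

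For part~\eqref{levy-11-iii}, the starting point is the algebraic identity, valid for $f,g\in C_c^\infty(\mbb{R}^d)$ by a direct computation from the Lévy--Khintchine form of $A$,
\begin{equation*}
	A(f\cdot g)=f\,Ag+g\,Af+\Gamma(f,g),
\end{equation*}
with $\Gamma$ as in \eqref{levy-eq48}; note the drift and second-order terms combine via the ordinary product and Leibniz rules and the nonlocal part produces exactly the bilinear "carré du champ" remainder. To upgrade this to all $f,g\in\mc{D}(A)$ I would argue as follows. Given $f,g\in\mc{D}(A)$, by \eqref{levy-11-i} both lie in $\mc{C}_\infty^\alpha(\mbb{R}^d)$; since $\alpha>0$ the product $f\cdot g$ lies in $\mc{C}_\infty^\alpha(\mbb{R}^d)$ as well (H\"older--Zygmund spaces of positive order are closed under multiplication, using \eqref{def-eq3}). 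The right-hand side $f\,Ag+g\,Af+\Gamma(f,g)$ is manifestly in $C_\infty(\mbb{R}^d)$: the first two summands are products of $C_\infty$ functions, and for $\Gamma(f,g)$ one splits the integral at $|y|=1$, estimating the inner region by $\|\nabla f\|_\infty\|\nabla g\|_\infty\int_{|y|<1}|y|^2\,\nu(dy)$ and the outer region by $4\|f\|_\infty\|g\|_\infty\,\nu(\{|y|\ge1\})$, with the decay at infinity coming from dominated convergence using that $f,g$ vanish at infinity. To conclude that $f\cdot g\in\mc{D}(A)$ with $A(f\cdot g)$ equal to this function, I would use that $A$ is a closed operator together with an approximation $f_n,g_n\in C_c^\infty$ with $f_n\to f$, $g_n\to g$ and $Af_n\to Af$, $Ag_n\to Ag$ in the sup norm (possible since $C_c^\infty$ is a core), and check that then $f_ng_n\to fg$, $f_nAg_n+g_nAf_n\to fAg+gAf$, and $\Gamma(f_n,g_n)\to\Gamma(f,g)$ uniformly — the last of these being the only slightly delicate point, handled by the bilinearity of $\Gamma$ and the same split-at-$|y|=1$ estimate applied to the differences, using convergence of $f_n,g_n$ in $\mc{C}^\alpha_b$ rather than merely in $C_\infty$ (which one can arrange by mollification). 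The main obstacle is precisely this last uniform-convergence step for $\Gamma$: one needs the approximants to converge in a norm strong enough to control $\int_{|y|<1}|\cdot|^2\nu(dy)$, i.e.\ in $C^1_b$ or $\mc{C}^\alpha_b$ with $\alpha$ possibly less than $1$, so some care with the Zygmund seminorm and the inequality $|\Delta_h f|\le\|f\|_{\mc{C}^\alpha_b}|h|^{\min(\alpha,1)}$ is required; everything else is routine. Finally, the remark that $\Gamma$ is genuinely the carré du champ, namely $\Gamma(f,g)=A(fg)-fAg-gAf$, is then just \eqref{levy-eq46} read backwards.
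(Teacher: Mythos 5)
Your treatment of part \eqref{levy-11-ii} and of the inclusion $\mc{D}(A)\subseteq\mc{C}_\infty^{\alpha}(\mbb{R}^d)$ coincides with the paper's: the gradient estimate from \cite{ssw12} verifies \eqref{levy-eq1}, \eqref{levy-eq45} forces \eqref{hw}, and Theorem~\ref{levy-9} does the rest. The other two parts you prove by genuinely different routes, and one of them has a real gap.

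For the inclusion $\mc{C}_\infty^{\alpha+}(\mbb{R}^d)\subseteq\mc{D}(A)$ your justification does not work as stated. You invoke the ``standard fact that a symbol of order $\alpha$ acts boundedly $\mc{C}^{s+\alpha}\to\mc{C}^{s}$,'' but every version of that multiplier theorem requires control of derivatives of the symbol (Hörmander/Mikhlin-type conditions $|\partial^\gamma\psi(\xi)|\lesssim|\xi|^{\alpha-|\gamma|}$). A Lévy exponent satisfying only $\re\psi(\xi)\asymp|\xi|^\alpha$ and the sector condition need not be differentiable at all --- the paper itself recalls that $\psi\in C^{2n}$ iff $\int_{|y|>1}|y|^{2n}\,\nu(dy)<\infty$, and avoiding such moment conditions is one of the points of the whole approach. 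The paper's argument is instead on the level of the Lévy triplet: from \cite[Lemma A.3]{ihke} the sector condition and $\alpha<2$ give $Q=0$ (and $b=\int_{|y|<1}y\,\nu(dy)$ if $\alpha<1$), the growth condition gives $\int_{0<|y|<1}|y|^{\beta}\,\nu(dy)<\infty$ for all $\beta>\alpha$, and then \cite[Theorem 4.1]{ihke} shows directly that the integro-differential expression for $Af$ converges and lies in $C_\infty(\mbb{R}^d)$ for $f\in\mc{C}_\infty^{\beta}(\mbb{R}^d)$. If you want to keep a self-contained argument, estimate the integral $\int\bigl(f(x+y)-f(x)-\nabla f(x)\cdot y\,\I_{(0,1)}(|y|)\bigr)\nu(dy)$ using the Hölder seminorm of $f$ and \eqref{levy-eq47}; do not route it through a Fourier multiplier theorem.

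For part \eqref{levy-11-iii} your approximation-plus-closedness scheme is a legitimate alternative to the paper's proof, which instead computes $\lim_{t\to0}t^{-1}\mbb{E}\bigl[(f(x+X_t)-f(x))(g(x+X_t)-g(x))\bigr]=\Gamma(f,g)(x)$ directly via the vague convergence $t^{-1}\mbb{P}(X_t\in\cdot)\to\nu$, a truncation near the origin controlled by the maximal inequality, and then concludes $fg\in\mc{D}(A)$ from pointwise convergence of the difference quotients to a $C_\infty$ function (\cite[Theorem 1.33]{ltp}); the paper's route avoids any core/approximation argument. If you pursue your route, two points must be fixed. First, the Leibniz identity on $C_c^\infty$ produces an extra cross term $\nabla f\cdot Q\nabla g$ from the second-order part, so \eqref{levy-eq46} with $\Gamma$ given by \eqref{levy-eq48} is only correct because $Q=0$; you must record that this follows from $\alpha<2$ and the sector condition. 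Second, you need approximants $f_n\in C_c^\infty$ with $f_n\to f$ and $Af_n\to Af$ uniformly \emph{and} $f_n\to f$ in $\mc{C}_b^{\alpha}$; mollification does not obviously give graph-norm convergence, but the Schauder estimate of part \eqref{levy-11-ii} with $\delta=0$ applied to $f_n-f$ converts graph-norm convergence into $\mc{C}_b^{\alpha}$-convergence, which is the clean way to close that step. Note also that for $\alpha=1$ the Zygmund seminorm only yields $|\Delta_hf|\lesssim|h|^{1-\eps}$, which still suffices because $2(1-\eps)>1$ makes $\int_{|y|<1}|y|^{2(1-\eps)}\,\nu(dy)$ finite by \eqref{levy-eq47}.
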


\begin{bem} \label{levy-12}  \begin{enumerate}[wide, labelwidth=!, labelindent=0pt]
		\item\label{levy-12-i} The proof of Theorem~\ref{levy-11}\eqref{levy-11-iii} shows the following slightly more general statement: Let $(X_t)_{t \geq 0}$ be a L\'evy process with generator $(A,\mc{D}(A))$ and characteristic exponent $\psi$ satisfying \begin{equation*} 
		\limsup_{|\xi| \to \infty} \frac{|\psi(\xi)|}{|\xi|^{\alpha}} < \infty
		\end{equation*}
		for some $\alpha \in (0,2)$. Let $f,g \in \mc{D}(A)$ be such that \begin{equation*}
		|f(x+y)-f(x)| \leq C_1 |y|^{\beta_1} \quad \text{and} \quad |g(x+y)-g(x)| \leq C_2 |y|^{\beta_2} 
		\end{equation*}
		for all $x \in \mbb{R}^d$ and $|y| \leq 1$. If $\beta_1+\beta_2>\alpha$ then $f \cdot g \in \mc{D}(A)$ and \eqref{levy-eq46} holds. 
		\item\label{levy-12-ii} Theorem~\ref{levy-11} can be used to establish inclusions of the form $\mc{D}(A) \subseteq \mc{D}(L)$ for L\'evy generators $A$ and $L$. More precisely, if $(X_t)_{t \geq 0}$ and $(Y_t)_{t \geq 0}$ are L\'evy processes with characteristic exponent $\psi$ and $\tilde{\psi}$, respectively, which both satisfy the sector condition and \begin{equation*}
		\re \psi(\xi) \asymp |\xi|^{\alpha} \quad \text{and} \quad \re \tilde{\psi}(\xi) \asymp |\xi|^{\beta} \quad \text{as $|\xi| \to \infty$}
		\end{equation*}
		for $\alpha<\beta$, then Theorem~\ref{levy-11} shows that the domain of the generator of $(X_t)_{t \geq 0}$ is contained in the domain of the generator of $(Y_t)_{t \geq 0}$. For instance, the domain $\mc{D}(A^{(\alpha)})$ of the infinitesimal generator associated with the isotropic $\alpha$-stable L\'evy process, $\alpha \in (0,2]$, satisfies $\mc{D}(A^{(\beta)}) \subseteq \mc{D}(A^{(\alpha)})$ for $\alpha<\beta$; this is a well-known result which can be, for instance, also proved using subordination, cf.\ \cite[Theorem 13.6]{bernstein}. 
		\item\label{levy-12-iii} In contrast to other authors, we consider the Carr\'e du champ operator $\Gamma$ as an operator on $C_{\infty}(\mbb{R}^d)$ and not on $L^2(dx)$. For further information on the Carr\'e du champ operator we refer the reader to \cite{bouleau91,meyer4}. 
	\end{enumerate}
\end{bem}

\begin{proof}[Proof of Theorem~\ref{levy-11}]
	Under the growth condition \eqref{levy-eq45} it is shown in \cite{ssw12} that the semigroup $P_t f(x) := \mbb{E}f(x+X_t)$ satisfies the gradient estimate \begin{equation*}
	\|\nabla P_t f\|_{\infty} \leq ct^{-1/\alpha} \|f\|_{\infty}, \qquad t \in (0,1], \, f \in \mc{B}_b(\mbb{R}^d)
	\end{equation*}
	for some absolute constant $c>0$. Since this implies $\int |\nabla p_t(x)| \, dx \leq c' t^{-1/\alpha}$, cf.\ Remark~\ref{levy-1}\eqref{levy-1-ii}, Theorem~\ref{levy-9} gives \eqref{levy-11-ii} and $\mc{D}(A) \subseteq \mc{C}_{\infty}^{\alpha}(\mbb{R}^d)$ . To prove $\mc{C}_{\infty}^{\beta}(\mbb{R}^d) \subseteq \mc{D}(A)$, $\beta>\alpha$, we need some properties of the L\'evy triplet $(b,Q,\nu)$ which are consequences of the growth condition \eqref{levy-eq45} and the sector condition. As $\alpha<2$ it follows from \cite[Lemma A.3]{ihke} that $Q=0$ and \cite[Lemma A.3]{ihke} also shows $b = \int_{|y|<1} y \, \nu(dy)$ if $\alpha<1$. Moreover,  \begin{equation}
	\int_{0<|y|<1} |y|^{\beta} \, \nu(dy)<\infty \fa \beta>\alpha, \label{levy-eq47}
	\end{equation}
	see e.\,g.\ \cite{blumen61,rs-growth} or \cite[Lemma A.2]{ihke} for a detailed proof.  By \cite[Theorem 4.1]{ihke}, these properties of the L\'evy triplet imply that $\mc{C}_{\infty}^{\beta}(\mbb{R}^d)\subseteq \mc{D}(A)$ for $\beta>\alpha$. It remains to prove \eqref{levy-11-iii}. Let $f,g \in \mc{D}(A)$ and fix $x \in \mbb{R}^d$.  We will first show that \begin{equation}
	\lim_{t \to 0} \frac{\mbb{E}([f(x+X_t)-f(x)] \cdot [g(x+X_t)-g(x)])}{t}
	=  \Gamma(f,g)(x) \label{levy-eq49}
	\end{equation}
	with $\Gamma(f,g)(x)$ defined in \eqref{levy-eq48}. Pick a truncation function $\chi \in C_c^{\infty}(\mbb{R}^d)$, $\I_{B(0,1)} \leq \chi \leq \I_{B(0,2)}$ and set $\chi_{\eps}(y) := \chi(\eps^{-1} y)$.  Since the function $y \mapsto (1-\chi_{\eps}(y)) (f(x+y)-f(x)) (g(x+y)-g(x))$ is continuous and equal to zero in a neighbourhood of $x$, the weak convergence $t^{-1} \mbb{P}(X_t \in \cdot) \to \nu(\cdot)$ as $t \to 0$, cf.\ \cite[Corollary 8.9]{sato} or \cite[Corollary 3.3]{ihke}, yields \begin{align*}
	&\lim_{t \to 0} \frac{\mbb{E} \big( (1-\chi_{\eps}(X_t)) (f(x+X_t)-f(x)) (g(x+X_t)-g(x)) \big) }{t} \\
	&\quad= \int_{y \neq 0} (1-\chi_{\eps}(y)) (f(x+y)-f(x)) (g(x+y)-g(x)) \, \nu(dy).
	\end{align*}
	By \eqref{levy-11-i}, we have $f,g \in \mc{D}(A) \subseteq \mc{C}_{\infty}^{\alpha}(\mbb{R}^d)$ and so \begin{equation}
	\left| \left( f(x+y)-f(x) \right) \left( g(x+y)-g(x) \right) \right| \leq C \min\{|y|^{2(\alpha \wedge 1)},1\}, \qquad x,y \in \mbb{R}^d; \label{levy-st2} \tag{$\star$}
	\end{equation}
	using \eqref{levy-eq47} a straight-forward application of the dominated convergence theorem now shows that the right-hand side of the previous equation converges to $\Gamma(f,g)(x)$ as $\eps \to 0$.  On the other hand, $\spt \chi_{\eps} \subseteq B(0,2\eps)$ and \eqref{levy-st2} give  \begin{align*}
	\left| \mbb{E}\big( \chi_{\eps}(X_t) (f(x+X_t)-f(x)) (g(x+X_t)-g(x)) \big) \right|
	&\leq \mbb{E}(|X_t|^{2(\alpha \wedge 1)} 1_{\{|X_t| \leq 2 \eps\}}) \\
	&= \int_{(0,2\eps)} \mbb{P}(|X_t|^{2(\alpha \wedge 1)} \geq r) \, dr.
	\end{align*}
	Applying the maximal inequality, see e.\,g.\ \cite[Corollary 5.2]{ltp}, and invoking the growth condition \eqref{levy-eq45} we thus find \begin{align*}
	\left| \mbb{E}\big( \chi_{\eps}(X_t) (f(X_t)-f(x)) (g(X_t)-g(x)) \big) \right|
	&\leq ct \int_0^{2 \eps} \sup_{|\xi| \leq r^{-1/2(\alpha \wedge 1)}} |\psi(\xi)| \, dr \\
	&\leq c' t \int_0^{2\eps} r^{-\alpha/2(\alpha \wedge 1)} \, dr
	\end{align*}
	for absolute constants $c,c'>0$. As $\int_0^1 r^{-\alpha/2(\alpha \wedge 1)} \, dr < \infty$ an application of the monotone convergence theorem yields \begin{equation*}
	\limsup_{\eps \to 0} \limsup_{t \to 0} \frac{\left| \mbb{E}\big( \chi_{\eps}(X_t) (f(X_t)-f(x)) (g(X_t)-g(x)) \big) \right|}{t} = 0;
	\end{equation*}
	combining this with the earlier consideration, this proves \eqref{levy-eq49}. Now let $f,g \in \mc{D}(A)$ and fix $x \in \mbb{R}^d$. Clearly, 
	\begin{align*}
	\mbb{E}\left(  f(x+X_t) g(x+X_t) \right)-f(x) g(x)
	&= f(x) \mbb{E}(g(x+X_t)-g(x)) + g(x) \mbb{E}(f(x+X_t)-f(x)) \\
	&\quad+ \mbb{E} \left( (f(x+X_t)-f(x))(g(x+X_t)-g(x)) \right).
	\end{align*}
	Dividing both sides by $t$ and letting $t$ to $0$ we obtain from \eqref{levy-eq49} and the very definition of the generator $A$ that \begin{equation*}
	L(f \cdot g)(x):=\lim_{t \to 0} \frac{1}{t} \left[\mbb{E} \left(  f(x+X_t) g(x+X_t) \right)-f(x) g(x) \right] = f(x) Ag(x) + g(x) Af(x)+\Gamma(f,g)(x).
	\end{equation*}
	Using the estimate \eqref{levy-st2} it follows from the dominated convergence theorem that $\Gamma(f,g) \in C_{\infty}(\mbb{R}^d)$, and, hence, $L(f \cdot g) \in C_{\infty}(\mbb{R}^d)$. This implies $f \cdot g \in \mc{D}(A)$ and $A(f \cdot g)=L(f \cdot g)$, see e.\,g.\ \cite[Theorem 1.33]{ltp}.
\end{proof}

\section{Domain of the infinitesimal generator of two-dimensional Cauchy process}  \label{cauchy}

Let $(X_t)_{t \geq 0}$ be an isotropic $\alpha$-stable L\'evy process, $\alpha \in (0,2)$. It follows from Theorem~\ref{levy-11}\eqref{levy-11-i} that the domain $\mc{D}(A)$ of the infinitesimal generator of $(X_t)_{t \geq 0}$ satisfies \begin{equation*}
	\mc{C}_{\infty}^{\alpha+}(\mbb{R}^d) := \bigcup_{\beta>\alpha} \mc{C}_{\infty}^{\beta}(\mbb{R}^d) \subseteq \mc{D}(A) \subseteq \mc{C}_{\infty}^{\alpha}(\mbb{R}^d).
\end{equation*}
In this section we investigate whether this is the optimal way to describe $\mc{D}(A)$ in terms of H\"older spaces and whether the inclusions are strict.  The case $\alpha=1$ is particularly interesting since there are several functions spaces which are possible candidates to describe the domain: \begin{itemize}
	\item the space of Lipschitz continuous functions $\lip(\mbb{R}^d)$ vanishing at infinity,
	\item the space $C^1_{\infty}(\mbb{R}^d)$ of differentiable functions  vanishing at infinity,
	\item the Zygmund space $\mc{C}_{\infty}^1(\mbb{R}^d)$ of functions $f$ vanishing at infinity and satisfying \begin{equation*}
		|\Delta_h^2 f(x)| = |f(x+h)+f(x-h)-2f(x)| \leq C |h|, \qquad x,h \in \mbb{R}^d,
	\end{equation*}
	for some constant $C>0$, see \eqref{def-eq2}. 
\end{itemize}
We will show that the domain $\mc{D}(A)$ of the generator of the two-dimensional Cauchy process has the following properties: \begin{itemize}
	\item There exists a function $f \in C_{\infty}^1(\mbb{R}^2)$ which is not in $\mc{D}(A)$, cf.\ Proposition~\ref{levy-17}.
	\item There exists a function $f \in \mc{D}(A)$ which is not Lipschitz continuous, cf.\ Theorem~\ref{levy-19}.
\end{itemize}
This implies that \begin{equation*}
	\mc{D}(A) \not\subseteq \lip (\mbb{R}^2)\cap C_{\infty}(\mbb{R}^2) \quad \mc{D}(A) \not \subseteq C_{\infty}^1(\mbb{R}^2) \quad C_{\infty}^1(\mbb{R}^2) \not \subseteq \mc{D}(A) \quad \lip(\mbb{R}^2) \cap C_{\infty}(\mbb{R}^2) \not \subseteq \mc{D}(A)
\end{equation*}
which clearly shows that the function spaces $\lip(\mbb{R}^2) \cap C_{\infty}(\mbb{R}^2)$ and $C_{\infty}^1(\mbb{R}^2)$ are not well suited for describing $\mc{D}(A)$. We conclude that \begin{equation*}
	\mc{C}_{\infty}^{1+}(\mbb{R}^2) \subseteq \mc{D}(A) \subseteq \mc{C}_{\infty}^1(\mbb{R}^2)
\end{equation*}
is the best possible way to describe $\mc{D}(A)$ in terms of H\"older spaces and, moreover, the inclusions are strict.

\begin{prop} \label{levy-17}
	Let $(X_t)_{t \geq 0}$ be a $d$-dimensional Cauchy process with generator $(A,\mc{D}(A))$. Then there exists a function $f \in C_{\infty}^1(\mbb{R}^d)$ which is not in $\mc{D}(A)$.
\end{prop}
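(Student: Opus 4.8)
The goal is to exhibit $f \in C_\infty^1(\mathbb{R}^d)$ with $f \notin \mathcal{D}(A)$, where $A$ is the generator of the $d$-dimensional Cauchy process, i.e.\ $\alpha = 1$. By Theorem~\ref{levy-11}\eqref{levy-11-i} we know $\mathcal{D}(A) \subseteq \mathcal{C}_\infty^1(\mathbb{R}^d)$, the Zygmund space. Since $C_\infty^1(\mathbb{R}^d)$ sits strictly inside $\mathcal{C}_\infty^1(\mathbb{R}^d)$ only in the sense of norms, the inclusion $\mathcal{D}(A) \subseteq \mathcal{C}_\infty^1$ alone does not rule out $C_\infty^1 \subseteq \mathcal{D}(A)$; I need a genuinely finer obstruction. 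The natural strategy is to find $f$ that is continuously differentiable and vanishes at infinity, but for which the candidate generator value $Af$ — computed via the pointwise formula $Af(x) = \int_{y \neq 0}(f(x+y)-f(x)-\nabla f(x)\cdot y\,\mathds{1}_{(0,1)}(|y|))\,\nu(dy)$ with the Cauchy Lévy measure $\nu(dy) = c_d |y|^{-d-1}\,dy$ — either fails to define a finite continuous function, or defines something that is not in $C_\infty(\mathbb{R}^d)$. Because membership in $\mathcal{D}(A)$ requires $Af \in C_\infty(\mathbb{R}^d)$ (and the pointwise formula is known to agree with the generator for, say, $C^2$ functions and can be justified for the candidate by a limiting/Dynkin argument), producing $f$ with $Af \notin C_\infty$ suffices.

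The plan is: first, work locally near the origin and build $f$ supported in a neighbourhood of $0$ (or a cutoff of a model function), $C^1$ globally, that behaves like $|x|\log(1/|x|)$ or a similar borderline function near $0$ — something $C^1$ with modulus of continuity of the gradient worse than Dini, so that the singular integral defining $Af(0)$ diverges, or $Af$ blows up as $x \to 0$. Concretely, in one variable the function $x \mapsto x^2 \log(1/|x|)$ near $0$ is $C^1$ with $f'(x) = 2x\log(1/|x|) - x$, and applying the $\alpha=1$ fractional Laplacian-type operator to it produces a term behaving like $\log(1/|x|)$, hence unbounded near $0$; the $d$-dimensional version uses a radial profile of this type times a smooth cutoff. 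Second, verify carefully that $f \in C_\infty^1(\mathbb{R}^d)$: smoothness away from $0$ is clear, and at $0$ one checks the gradient is continuous (the $\log$ is tamed by the extra power). Third, justify that for this $f$ one may compute $Af$ via the integro-differential formula — this is where the Hartman--Wintner smoothing and the known characterization of $\mathcal{D}(A)$, or alternatively a direct estimate using the resolvent representation $f = R_\lambda h$ forcing $h = (\lambda - A)f$, comes in: if $f \in \mathcal{D}(A)$ then $Af \in C_\infty$, and by approximating $f$ by $C_c^\infty$ functions and using that $A$ is closed, $Af$ must coincide with the pointwise integral expression wherever the latter converges. Fourth, show this pointwise expression is unbounded (or discontinuous) near $0$, contradicting $Af \in C_\infty(\mathbb{R}^d)$.

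The main obstacle I anticipate is the bookkeeping in step three — rigorously identifying the generator value with the singular integral for a function that is merely $C^1$, not $C^2$, so that the naive pointwise formula is not immediately legitimate. One clean route around this: choose $f$ so that it is actually smooth except at finitely many points (the cutoff model function), use that $A$ is a closed operator, and argue by contradiction — if $f \in \mathcal{D}(A)$, let $g := Af \in C_\infty(\mathbb{R}^d)$; then for test functions $\varphi \in C_c^\infty$ one has $\int f\, A^*\varphi = \int g\, \varphi$ where $A^*$ is the generator of the dual (here $A$ is symmetric since the Cauchy process is symmetric), and one can then identify $g$ on $\mathbb{R}^d \setminus \{0\}$ with the classical integral $Af(x)$, which converges for $x \neq 0$ since $f$ is $C^2$ there and decays; finally a direct computation shows $\lim_{x \to 0} |Af(x)| = \infty$, which is incompatible with $g \in C_\infty(\mathbb{R}^d)$. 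The remaining analytic work — estimating $\int_{|y|<1}(f(x+y)-f(x)-\nabla f(x)\cdot y)|y|^{-d-1}\,dy$ for the chosen borderline $f$ and extracting the logarithmic blow-up — is a routine but slightly delicate singular-integral computation that I would carry out by splitting the domain into $|y| < |x|$ and $|x| \le |y| < 1$ and Taylor-expanding on the inner region.
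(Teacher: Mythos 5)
There is a genuine gap, and it is in the choice of the model function. Your abstract criterion is the right one (a $C^1$ function whose gradient fails a Dini condition at one point, so that the first--order Taylor remainder is not integrable against $|y|^{-d-1}\,dy$), but both concrete candidates you name fail it. The function $|x|\log(1/|x|)$ is not differentiable at $0$ (the difference quotient along a ray is $\log(1/|x|)\to\infty$), so it is not in $C^1_\infty(\mbb{R}^d)$ to begin with. The function $x^2\log(1/|x|)$ (times a cutoff) has derivative $2x\log(1/|x|)-x$, whose modulus of continuity is $O(h\log(1/h))=O(h^{\beta})$ for every $\beta<1$; hence it lies in $C_b^{1,\beta}$ for all $\beta\in(0,1)$, thus in $\mc{C}^{1+}_{\infty}(\mbb{R}^d)$, and therefore in $\mc{D}(A)$ by Theorem~\ref{levy-11}\eqref{levy-11-i} applied to the Cauchy process ($\psi(\xi)=|\xi|$). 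So no logarithmic blow--up of $Af$ can occur for that function, and your claimed contradiction evaporates. What is needed is a function squeezed strictly between $|x|^{1+\eps}$ (for every $\eps>0$) and $|x|$; the paper takes $f(x)=|x|/|\log|x||\cdot\chi(x)$, which is $C^1$ with $\nabla f(0)=0$ but has $\int_{|y|<1/4} f(y)\,|y|^{-d-1}\,dy=\int_0^{1/4} r^{-1}|\log r|^{-1}\,dr=\infty$.

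Separately, the machinery you build in ``step three'' (closedness of $A$, duality, identifying $Af$ with the singular integral on $\mbb{R}^d\setminus\{0\}$, blow--up as $x\to 0$) is avoidable and is where your write--up is most fragile. Once the model function is chosen nonnegative with $f(0)=0$ and $\nabla f(0)=0$, one only needs to test the \emph{definition} of the generator at the single point $x=0$: using the lower heat kernel bound $p_t(y)\geq c\,t|y|^{-d-1}$ for $|y|\geq t$,
\begin{equation*}
\frac{\mbb{E}f(X_t)-f(0)}{t}\;\geq\; c\int_{t<|y|<1/4}\frac{1}{|y|^{d}\,|\log|y||}\,dy\;\xrightarrow{t\to0}\;\infty ,
\end{equation*}
so the uniform (indeed even pointwise) limit defining $Af(0)$ cannot exist and $f\notin\mc{D}(A)$. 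No identification of $Af$ with the integro--differential expression, and no argument about continuity of $Af$ near $0$, is required. I would rework the proof along these lines.
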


\begin{proof}
	Let $\chi \in C_c^{\infty}(\mbb{R}^d)$ be a cut-off function such that $\I_{B(0,1/4)} \leq \chi \leq \I_{B(0,1/2)}$, and define \begin{equation*}
	f(x) := \I_{\mbb{R}^d \backslash \{0\}}(x)\frac{|x|}{|\log |x||} \chi(x), \qquad x \in \mbb{R}^d. 
	\end{equation*}
	If we set  $\varphi(r) := |\log r|^{-1}$, then $\varphi(r) \to 0$ as $r \to 0$ and \begin{equation*}
		f(x) = 0 + 0 \cdot x + |x| \varphi(|x|), \qquad |x|< \frac{1}{2}
	\end{equation*}
	which shows that $f$ is differentiable at $x=0$ and $\nabla f(0)=0$. For $x \neq 0$ the differentiability is obvious. Clearly, $f$ and its derivatives are vanishing at infinity, and so $f \in C_{\infty}^1(\mbb{R}^d)$. 	Since the transition density $p_t$ of $X_t$ satisfies \begin{equation*}
		p_t(y) \geq c \frac{t}{|y|^{d+1}}, \qquad |y| \geq t,
	\end{equation*}
	for some constant $c>0$, we find from 
	\begin{equation*}
			\frac{\mbb{E} f(X_t)-f(0)}{t}
			\geq \frac{1}{t} \mbb{E} \left( \frac{|X_t|}{|\log (|X_t|)|} \I_{\{0<|X_t|<1/4\}} \right) 
		\end{equation*}
	that \begin{align*}
	\frac{\mbb{E} f(X_t)-f(0)}{t}
	\geq \frac{1}{t} \int_{0<|y|<1/4} \frac{|y|}{|\log(|y|)|} p_t(y) \, dy 
	&\geq c \int_{t<|y|<1/4} \frac{1}{|y|^d} \frac{1}{|\log(|y|)|} \, dy \\
	&\xrightarrow[]{t \to 0} \infty,
	\end{align*}
	and so $f \notin \mc{D}(A)$. 
\end{proof}

\begin{thm} \label{levy-19}
	Let $(X_t)_{t \geq 0}$ be a $2$-dimensional Cauchy process with generator $(A,\mc{D}(A))$. Then there exists a function $f \in \mc{D}(A)$ which is not Lipschitz continuous.
\end{thm}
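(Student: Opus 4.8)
The plan is to construct $f \in \mc{D}(A)$ explicitly and verify that it fails to be Lipschitz. Since $\mc{D}(A) = R_{\lambda}(C_{\infty}(\mbb{R}^2))$, one natural route is to exhibit a suitable $h \in C_{\infty}(\mbb{R}^2)$ and set $f := R_{\lambda} h$; then automatically $f \in \mc{D}(A)$, and the task reduces to showing that $R_{\lambda} h$ is not Lipschitz near some point (say the origin). By the Hartman--Wintner machinery and Proposition~\ref{levy-3}, for the Cauchy process ($\alpha = 1$) we know $\mc{D}(A) \subseteq \mc{C}_{\infty}^1(\mbb{R}^2)$, i.e.\ $f$ satisfies the Zygmund condition $|\Delta_h^2 f(x)| \leq C|h|$; so the failure of the Lipschitz property must be of the borderline Zygmund type. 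The model behaviour I would aim for is $f(x) \sim |x| \log|\log|x||$ or $f(x) \sim |x_1|\log(1/|x_1|)$ near $0$ — a function that is Zygmund-$C^1$ but whose gradient blows up logarithmically.

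First I would write down the resolvent density: the potential kernel of the $2$-dimensional Cauchy process is $u_\lambda(x) = \int_0^\infty e^{-\lambda t} p_t(x)\,dt$, and using the explicit Cauchy density $p_t(x) = c_2 t (t^2 + |x|^2)^{-3/2}$ one gets precise two-sided bounds: $u_\lambda(x) \asymp \log(1/|x|)$ as $|x| \to 0$ (the logarithmic singularity characteristic of the recurrent $1$-stable process in dimension $2$... actually one should check transience/recurrence, but the local behaviour $\sim \log(1/|x|)$ is what matters, and it is integrable). Then I would choose $h$ to be a nonnegative bump, $h \geq 0$, $h \not\equiv 0$, $h \in C_c^\infty$, supported near $0$, and study $f(x) = R_\lambda h(x) = \int u_\lambda(x-y) h(y)\,dy = (u_\lambda * h)(x)$. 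The key computation is to estimate $\partial_{x_j} f(x)$ for small $|x|$: differentiating under the integral, $\partial_{x_j} f(x) = \int \partial_{x_j} u_\lambda(x-y) h(y)\,dy$, and since $|\nabla u_\lambda(z)| \asymp |z|^{-1}$ near $z = 0$, the integral behaves like $\int_{|z| \lesssim 1} |z|^{-1} h(x - z)\,dz$, which is bounded — so a single convolution with a smooth bump is too smoothing. The honest fix is to take $h$ itself only Hölder-continuous but still in $C_\infty$, or better, to engineer $h$ with a mild singularity so that $\nabla f$ diverges; e.g.\ take $h$ supported on a small ball with $h(y) \asymp |\log|y||^{-2}$ type decay, so that $\int |\nabla u_\lambda(x-y)| h(y)\,dy \asymp \int_{|z|} |z|^{-1} |\log|z||^{-2}\,dz$ near $x = 0$ diverges as $|x| \to 0$ at the right rate, while $h$ is still continuous and vanishing at infinity.

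The cleaner alternative, which I would actually pursue, is to reverse-engineer: guess $f$, verify $f \in \mc{C}_\infty^1(\mbb{R}^2)$ (Zygmund), and then show $Af \in C_\infty(\mbb{R}^2)$ directly — this gives $f \in \mc{D}(A)$ without going through the resolvent. Take $f(x) = |x_1| \cdot |\log|x_1||^{-1}\cdot (\text{smooth truncation})$ — wait, that is not even $C^1$; instead take something like $f(x) = g(x_1)$ with $g(s) = \int_0^s \log|\log t|\,dt$ truncated, or the two-dimensional radial $f(x) = \int_0^{|x|} \psi(r)\,dr$ with $\psi(r) \uparrow \infty$ slowly (so $f$ is $C^1$ away from $0$, Lipschitz fails since $\nabla f$ is unbounded, yet the Zygmund bound $|\Delta_h^2 f| \leq C|h|$ holds because $\psi$ grows subpolynomially — this needs $|\psi(2r) - \psi(r)| \lesssim 1$, true for $\psi(r) = \log\log(1/r)$). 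Then one computes $Af(x) = \text{p.v.}\int (f(x+y) - f(x) - \nabla f(x)\cdot y\I_{|y|<1})\,\nu(dy)$ with $\nu(dy) = c|y|^{-3}\,dy$ and checks that the near-diagonal singularity is controlled precisely by the Zygmund bound (the $|y|^{-3}$ against $|h|^2$-type second difference gives an integrable $\int_0^1 |y| \cdot |y|^{-3} \cdot |y|^2 \cdot |y|\,$... one must track the logarithmic correction carefully), and that $Af \to 0$ at infinity and is continuous.

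The main obstacle is the delicate balance at $\alpha = 1$: one needs $f$ to sit \emph{exactly} in the gap $\mc{C}_\infty^1 \setminus \mathrm{Lip}$, so the estimates showing $Af \in C_\infty$ are borderline and require handling the $\log$ factor with care — in particular verifying that the compensator term $\nabla f(x)\cdot y$ (whose coefficient $\nabla f(x)$ is itself unbounded as $x \to 0$) combines with the second-difference bound to yield a finite, continuous $Af(x)$ even as $x \to 0$. A secondary technical point is confirming $f \in \mc{C}_\infty^1$, i.e.\ the Zygmund estimate for the chosen $f$, which amounts to the elementary but slightly fiddly inequality $|\psi(|x+h|) + \psi(|x-h|) - 2\psi(|x|)| \cdot |x| + (\text{first-order terms}) \lesssim |h|$ for $\psi(r) = \log\log(1/r)$ — doable by splitting into $|h| \leq |x|/2$ and $|h| > |x|/2$ and Taylor-expanding in the first regime.
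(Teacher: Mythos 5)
There is a genuine gap, and it sits in the route you say you would actually pursue. A radial candidate $f(x)=\int_0^{|x|}\psi(r)\,dr$ with $\psi(r)\uparrow\infty$ as $r\downarrow 0$ (e.g.\ $\psi(r)=\log\log(1/r)$, truncated) is provably \emph{not} in $\mc{D}(A)$: since $f\geq f(0)=0$ and $f(y)\geq|y|\psi(|y|)$ near the origin, the lower bound $p_t(y)\geq c\,t|y|^{-3}$ for $|y|\geq t$ gives
\begin{equation*}
	\frac{\mbb{E}f(X_t)-f(0)}{t}\;\geq\; c\int_{t<|y|<1/4}\frac{\psi(|y|)}{|y|^2}\,dy \;=\; c'\int_t^{1/4}\frac{\psi(r)}{r}\,dr\;\xrightarrow[]{t\to0}\;\infty,
\end{equation*}
which is exactly the argument of Proposition~\ref{levy-17} showing $|x|/|\log|x||\notin\mc{D}(A)$. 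The underlying obstruction is that for the Cauchy process in $d=2$ the Zygmund bound $|\Delta_y^2f(x)|\lesssim|y|$ is \emph{not} enough to make the symmetrized generator integral converge: $\int_{|y|<1}|y|\cdot|y|^{-3}\,dy=\int_0^1 r^{-1}\,dr=\infty$. So a function sitting in $\mc{C}_\infty^1\setminus\lip$ can only belong to $\mc{D}(A)$ through genuine cancellation, and any nonnegative profile that is ``non-Lipschitz from above'' at a minimum point is excluded. Your plan to ``check that the near-diagonal singularity is controlled precisely by the Zygmund bound'' therefore cannot close; the missing idea is a sign/angular structure producing the cancellation.

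This is precisely what the paper's construction supplies. It works with the potential operator $R_0$ (Lemma~\ref{levy-21}): every nonnegative $u\in C_\infty(\mbb{R}^2)\cap L^1(dx)$ lies in $\mc{D}(R_0)$, and $R_0(\mc{D}(R_0))\subseteq\mc{D}(A)$, so membership in the domain is automatic and no verification that $Af\in C_\infty$ is ever needed. Note also that the potential kernel of the $2$-dimensional Cauchy process is the Riesz kernel $c|z|^{-1}$ (indeed $\int_0^\infty t(t^2+|x|^2)^{-3/2}\,dt=|x|^{-1}$; the process is transient since $\alpha<d$), not $\log(1/|z|)$ as you assert — this error propagates into your gradient estimate for $u_\lambda$. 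The paper then takes $u(x)=\frac{x_1}{|x|}\frac{|x_2|}{|x|}f(|x|)$ with $f(r)=|\log r|^{-1}\chi(r)$: oddness in $x_1$ forces $R_0u(0)=0$, and an explicit polar-coordinate computation gives $\frac{R_0u(0,x_2)}{x_2}\asymp\int_{x_2}^1\frac{dr}{r|\log r|}\to\infty$, so $R_0u$ is not Lipschitz at $0$. Your first route (convolving a potential kernel against an $h$ carrying a logarithmic feature) is the right one in spirit, but without such a concrete choice — and with the kernel asymptotics corrected — the proposal does not yet produce a function that works.
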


Let us mention that the proof of Theorem~\ref{levy-19} has been inspired by G\"{u}nter \cite{guenter34} who constructed a function $f \in C_{\infty}(\mbb{R}^3)$ which is in the domain of the generator of three-dimensional Brownian motion but which is not twice differentiable, see \cite[Example 7.25]{bm2} for a modern account. \par
For the proof of Theorem~\ref{levy-19} we need an auxiliary result concerning the potential operator of an isotropic $\alpha$-stable L\'evy process $(X_t)_{t \geq 0}$. Recall that the potential operator $(R_0,\mc{D}(R_0))$ (in the sense of Yoshida) associated with a L\'evy process $(X_t)_{t \geq 0}$ and resolvent $(R_{\lambda})_{\lambda>0}$ is defined by \begin{align*}
	\mc{D}(R_0) &:= \{f \in C_{\infty}(\mbb{R}^d); \exists g \in C_{\infty}(\mbb{R}^d): \, \, \lim_{\lambda \to 0} \|R_{\lambda} f-g\|_{\infty} = 0\}, \\
	R_0 f &:= \lim_{\lambda \to 0} R_{\lambda} f, \quad f \in \mc{D}(R_0),
\end{align*}
see \cite[Section 11]{berg} for a thorough discussion.

\begin{lem} \label{levy-21}
	Let $(X_t)_{t \geq 0}$ be a $d$-dimensional isotropic $\alpha$-stable L\'evy process with resolvent $(R_{\lambda})_{\lambda>0}$. If $\alpha<d$ then there exists a finite constant $c_{d,\alpha}>0$ such that \begin{equation}
		\sup_{\lambda>0} R_{\lambda} u(x) = c_{d,\alpha}  \int_{\mbb{R}^d} |z|^{-d+\alpha} u(x+z) \, dz \label{levy-eq59}
	\end{equation}
	for any $u \in C_{\infty}(\mbb{R}^d)$, $u \geq 0$. In particular, any non-negative function $u \in C_{\infty}(\mbb{R}^d) \cap L^1(dx)$ is in the domain $\mc{D}(R_0)$ of the potential operator $R_0$.
\end{lem}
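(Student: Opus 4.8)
The plan is to compute the resolvent kernel of the isotropic $\alpha$-stable process explicitly and pass to the limit $\lambda \to 0$ using monotone convergence. First I would recall that for an isotropic $\alpha$-stable L\'evy process the transition density $p_t$ is given by $p_t(x) = t^{-d/\alpha} p_1(t^{-1/\alpha} x)$, with $p_1 \geq 0$ a bounded, radially decreasing function satisfying $\int_{\mbb{R}^d} p_1(x)\,dx = 1$. Hence for $\lambda > 0$ the resolvent $R_\lambda$ acts as an integral operator,
\begin{equation*}
	R_\lambda u(x) = \int_{\mbb{R}^d} r_\lambda(z) u(x+z)\,dz, \qquad r_\lambda(z) := \int_0^\infty e^{-\lambda t} p_t(z)\,dt,
\end{equation*}
and by Tonelli $r_\lambda \geq 0$ is finite a.e.; the scaling $p_t(z) = t^{-d/\alpha}p_1(t^{-1/\alpha}z)$ and the substitution $s = \lambda t$ give $r_\lambda(z) = \lambda^{d/\alpha - 1} r_1(\lambda^{1/\alpha} z)$.

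Next I would identify $r_0(z) := \lim_{\lambda \downarrow 0} r_\lambda(z) = \int_0^\infty p_t(z)\,dt$; this limit is monotone increasing as $\lambda \downarrow 0$, so by monotone convergence $\sup_{\lambda>0} R_\lambda u(x) = \lim_{\lambda\downarrow 0} R_\lambda u(x) = \int_{\mbb{R}^d} r_0(z) u(x+z)\,dz$ for every non-negative $u \in C_\infty(\mbb{R}^d)$. It remains to evaluate $r_0$. Using the scaling of $p_t$ and the substitution $t \mapsto |z|^\alpha t$ one gets
\begin{equation*}
	r_0(z) = \int_0^\infty t^{-d/\alpha} p_1(t^{-1/\alpha} z)\,dt = |z|^{-d+\alpha} \int_0^\infty s^{-d/\alpha} p_1(s^{-1/\alpha} e)\,ds =: c_{d,\alpha} |z|^{-d+\alpha},
\end{equation*}
where $e$ is any unit vector (the integral is independent of the direction of $z$ by isotropy). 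One must check $c_{d,\alpha} < \infty$ precisely when $\alpha < d$: near $s = 0$ the integrand is controlled because $p_1$ is bounded and $\int_0^1 s^{-d/\alpha}\,ds$ — wait, this needs the tail estimate instead; more carefully, splitting the $t$-integral at $t = |z|^\alpha$, the small-$t$ part uses $p_t(z) \leq C t |z|^{-d-\alpha}$ (the standard heat-kernel bound $p_t(z) \asymp \min\{t^{-d/\alpha}, t|z|^{-d-\alpha}\}$) giving a convergent integral $\int_0^{|z|^\alpha} t|z|^{-d-\alpha}\,dt$, and the large-$t$ part uses $p_t(z) \leq C t^{-d/\alpha}$, so $\int_{|z|^\alpha}^\infty t^{-d/\alpha}\,dt < \infty$ iff $d/\alpha > 1$, i.e.\ $\alpha < d$. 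This yields \eqref{levy-eq59}.

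For the final assertion, let $u \in C_\infty(\mbb{R}^d) \cap L^1(dx)$ be non-negative. Then $R_0 u(x) = c_{d,\alpha}\int_{\mbb{R}^d}|z|^{-d+\alpha} u(x+z)\,dz$ is the Riesz potential $I_\alpha u$ up to a constant; since $u \in L^1 \cap L^\infty$ (it is bounded, being in $C_\infty$) and $\alpha < d$, the kernel $|z|^{-d+\alpha}$ is locally integrable and has the right decay, so $I_\alpha u$ is finite everywhere, continuous, and vanishes at infinity (split the integral over $|z| < R$ and $|z| \geq R$; on the first piece use $u \in L^\infty$ and local integrability of the kernel, on the second use $u \in L^1$ and that $\sup_{|z|\geq R}|z|^{-d+\alpha} \to 0$), hence $R_0 u \in C_\infty(\mbb{R}^d)$. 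Finally $\|R_\lambda u - R_0 u\|_\infty \to 0$ follows from the monotone pointwise convergence together with $R_0 u \in C_\infty$: indeed $R_\lambda u \uparrow R_0 u$ pointwise with $R_\lambda u, R_0 u \in C_\infty$, and by Dini's theorem applied on one-point compactifications (the differences are continuous on $\mbb{R}^d \cup \{\infty\}$ and decrease to $0$), the convergence is uniform; therefore $u \in \mc{D}(R_0)$. The main obstacle is the bookkeeping around the boundary case and making the kernel computation rigorous — in particular justifying that the direction-integral defining $c_{d,\alpha}$ is finite exactly under $\alpha < d$ via the two-sided heat-kernel bounds, and handling the $\alpha=1$, $d\geq 2$ situation (which is the case of interest for the Cauchy process in $d=2$) uniformly with the rest.
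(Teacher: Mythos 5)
Your proposal is correct and follows essentially the same route as the paper: the identity \eqref{levy-eq59} comes from the self-similarity $p_t(x)=t^{-d/\alpha}p_1(t^{-1/\alpha}x)$ together with monotone convergence in $\lambda$, and membership in $\mc{D}(R_0)$ is reduced to showing $\sup_{\lambda>0}R_{\lambda}u\in C_{\infty}(\mbb{R}^d)$. The only difference is that the paper delegates these two steps to the literature (\cite{bogdan09} for the Riesz-kernel computation and \cite[Theorem 7.24(d)]{bm2} for the $C_{\infty}$-criterion), whereas you carry them out directly — including the finiteness check for $c_{d,\alpha}$ via the heat-kernel bounds and the Dini argument on the one-point compactification — which makes the argument self-contained but is not a genuinely different proof.
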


\begin{proof}[Proof of Lemma~\ref{levy-21}]
	Identity \eqref{levy-eq59} is a direct consequence of the scaling property of the transition density of $(X_t)_{t \geq 0}$; it is a classical result in potential theory, see e.\,g.\ \cite{bogdan09} for a proof. For the second assertion, we note that $\int_{\mbb{R}^d \cap B(0,1)} |y|^{\alpha-d} \, dy <\infty$ implies, by the dominated convergence theorem, that $\sup_{\lambda>0} R_{\lambda} u \in C_{\infty}(\mbb{R}^d)$ for any non-negative function $u \in C_{\infty}(\mbb{R}^d) \cap L^1(dx)$. By \cite[Theorem 7.24(d)]{bm2} this entails that $u \in \mc{D}(R_0)$ for any such function $u$.
\end{proof} 

\begin{proof}[Proof of Theorem~\ref{levy-19}]
	
	As $R_0(\mc{D}(R_0)) \subseteq \mc{D}(A)$, cf.\ \cite[Lemma 11.13(vi)]{berg}, it suffices to find $u \in \mc{D}(R_0)$ such that $R_0 u$ is not Lipschitz continuous. It follows from Lemma~\ref{levy-21} and the linearity of $R_0$ that \begin{equation*}
		R_0 u(x) = c \int_{\mbb{R}^2} |z|^{-1} u(x-z) \, dz, \qquad x \in \mbb{R}^2
	\end{equation*}
	for any function $u \in C_{\infty}(\mbb{R}^2) \cap L^1(dx)$. Pick a function $f \in C_c([0,1))$ such that $f \geq 0$ and $f(0)=0$. If we define \begin{equation*}
		u(x_1,x_2) := \frac{x_1}{\sqrt{x^2_1+x_2^2}} \frac{|x_2|}{\sqrt{x_1^2+x_2^2}} f \left(\sqrt{x_1^2+x_2^2}\right), \qquad x=(x_1,x_2) \in \mbb{R}^2, 
	\end{equation*}
	then $u \in C_{\infty}(\mbb{R}^2) \cap L^1(dx) \subseteq \mc{D}(R_0)$. We will show that $f$ can be chosen in such a way that $x \mapsto R_0 u(x)$ is not Lipschitz continuous at $x=0$. Introducing polar coordinates we find \begin{align*}
		R_0 u(0,x_2)
		= c \int_0^1 r f(r) \left( \int_0^{2\pi} \frac{|\sin \varphi| \cos \varphi}{\sqrt{x_2^2+r^2-2rx_2 \cos \varphi}} \, d\varphi \right) \, dr.
	\end{align*}
	Writing \begin{align*}
		I:=\int_0^{2\pi} \frac{|\sin \varphi| \cos \varphi}{\sqrt{x_2^2+r^2-2rx_2 \cos \varphi}} \, d\varphi = \left( \int_0^{\pi} + \int_{\pi}^{2\pi} \right) \frac{|\sin \varphi| \cos \varphi}{\sqrt{x_2^2+r^2-2rx_2 \cos \varphi}} \, d\varphi
	\end{align*}
	and performing a change of variables, $t := \sqrt{x_2^2+r^2-2rx_2 \cos \varphi}$, we get for $x_2>0$ \begin{align*}
		I 
		= \frac{1}{rx_2} \int_{|x_2-r|}^{x_2+r} \cos \varphi(t)  \, dt -  \frac{1}{rx_2} \int_{x_2+r}^{|x_2-r|} \cos \varphi(t) \, dt 
		&= \frac{2}{rx_2} \int_{|x_2-r|}^{x_2+r} \frac{x_2^2+r^2-t^2}{rx_2} \, dt \\
	\end{align*}
	and so
	\begin{equation*}
		I = \frac{4}{3} \frac{r}{x_2^2} \I_{\{r \leq x_2\}} + \frac{4}{3} \frac{x_2}{r^2} \I_{\{r>x_2\}}.
	\end{equation*}
	Hence, \begin{align*}
	R_0 u(0,x_2)
	= \frac{4}{3} c \left( \frac{1}{x_2^2} \int_0^{x_2} r^2 f(r) \, dr + x_2 \int_{x_2}^1 \frac{f(r)}{r} \, dr \right).
	\end{align*}
	As $u(x) =-u(-x)$ we have \begin{equation*}
	\int_{\mbb{R}^2} |z|^{-1} u(z) \, dz 
	= \int_{\mbb{R}^2} |z|^{-1} u(-z) \, dz
	= - \int_{\mbb{R}^2} |z|^{-1} u(z) \, dz
	\end{equation*}
	which implies $R_0 u(0)=0$. Consequently, we have shown that \begin{align*}
	\frac{R_0 u(0,x_2)-R_0u(0)}{x_2}
	= \frac{4}{3} c \left( \frac{1}{x_2^3} \int_0^{x_2} r^2 f(r) \, dr + \int_{x_2}^1 \frac{f(r)}{r} \, dr \right).
	\end{align*}
	If we choose $f(r) = |\log(r)|^{-1} \chi(r) 1_{(0,\infty)}(r)$ for a cut-off function $\chi$ satisfying $\I_{[0,1/2]} \leq \chi \leq \I_{[0,1)}$, then $\lim_{x_2 \downarrow 0} \int_{x_2}^1 \frac{f(r)}{r} \, dr = \infty$ and \begin{align*}
		\left| \frac{1}{x_2^3} \int_0^{x_2} r^2 f(r) \, dr \right|
		=  \left|\frac{1}{x_2^3} \int_0^{x_2} \frac{r^2}{|\log r|} \, dr \right| 
		\leq \frac{1}{|\log |x_2||} \frac{1}{x_2^3} \int_0^{x_2} r^2 \, dr 
	\xrightarrow[]{x_2 \downarrow 0} 0.
	\end{align*}
	Thus, \begin{equation*}
		\lim_{x_2 \downarrow 0}\frac{R_0 u(0,x_2)-R_0u(0)}{x_2}=\infty,
	\end{equation*}
	i.\,e.\ $x \mapsto R_0 u(x)$ is not Lipschitz continuous at $x=0$.
\end{proof}

\begin{ack}
	I am grateful to Niels Jacob and Ren\'e Schilling for valuable comments which helped to improve the presentation of this paper; I owe the proof of Remark~\ref{levy-5}\eqref{levy-5-ii} to Ren\'e Schilling. Moreover, I thank the \emph{Institut national des sciences appliqu\'ees de Toulouse, G\'enie math\'ematique et mod\'elisation} for its hospitality during my stay in Toulouse, where a part of this work was accomplished.
\end{ack}

\end{document}